\documentclass[11pt]{amsart}

\textwidth 16.00cm
\textheight 20cm
\topmargin 0.0cm
\oddsidemargin 0.0cm

\evensidemargin 0.0cm
\parskip 0.0cm

\usepackage{amssymb}
\usepackage{mathrsfs}
\usepackage{amsfonts}
\usepackage{latexsym,amsmath,amsthm,amssymb,amsfonts}
\usepackage[usenames]{color}
\usepackage{xspace,colortbl}
\usepackage{graphicx}
\usepackage{tipa}

\newcommand{\be}{\begin{equation}}
\newcommand{\ee}{\end{equation}}
\newcommand{\beq}{\begin{eqnarray}}
\newcommand{\eeq}{\end{eqnarray}}

\newtheorem{prop}{Proposition}[section]

\newtheorem{remark}[prop]{Remark}

\def\begeq{\begin{equation}}
\def\endeq{\end{equation}}

\def\odot{\setbox0=\hbox{$\bigcirc$}\relax \mathbin {\hbox
to0pt{\raise.5pt\hbox to\wd0{\hfil $\wedge$\hfil}\hss}\box0 }}

\numberwithin{equation} {section}

\numberwithin{equation}{section}
\textheight=8.8in
\textwidth=6.28in
\topmargin=0mm
\oddsidemargin=0mm
\evensidemargin=0mm

\newtheorem{theorem}{\bf Theorem}[section]

\newtheorem{lemma}[theorem]{\bf Lemma}

\newtheorem{corollary}[theorem]{\bf Corollary}

\allowdisplaybreaks

\begin{document}
\title[Asymptotic convergence for a class of fully nonlinear ICFs in a cone]
 {Asymptotic convergence for a class of fully nonlinear inverse
curvature flows in a cone}

\author{ Ya Gao$^{\dagger}$,~~~Jing Mao$^{\ddagger,\ast}$
 }

\address{
 $^{\dagger}$School of Mathematical Science and Academy for Multidisciplinary Studies, Capital Normal University, Beijing 100048, China. }

\email{Echo-gaoya@outlook.com}

\address{
 $^{\ddagger}$Faculty of Mathematics and Statistics, Key Laboratory of
Applied Mathematics of Hubei Province, Hubei University, Wuhan
430062, China. }

\email{jiner120@163.com}

\thanks{$\ast$ Corresponding author}

\date{}
\begin{abstract}
For a given smooth convex cone in the Euclidean $(n+1)$-space
$\mathbb{R}^{n+1}$ which is centered at the origin, we investigate
the evolution of strictly mean convex hypersurfaces, which are
star-shaped with respect to the center of the cone and which meet
the cone perpendicularly, along an inverse curvature flow with the
speed equal to $\left(f(r)H\right)^{-1}$, where $f$ is a positive
function of the radial distance parameter $r$ and $H$ is the mean
curvature of the evolving hypersurfaces.  The evolution of those
hypersurfaces inside the cone yields a fully nonlinear parabolic
Neumann problem. Under suitable constraints on the first and the
second derivatives of the radial function $f$, we can prove the
long-time existence of this flow, and moreover the evolving
hypersurfaces converge smoothly to a piece of the round sphere.
\end{abstract}

\maketitle {\it \small{{\bf Keywords}: Inverse curvature flow,
Neumann boundary condition, asymptotic convergence.}

{{\bf MSC 2020}: Primary 53E10, Secondary 35K10.}}

\section{Introduction}

Let $\mathscr{M}_{0}$ be a smooth, closed and convex hypersurface in
the Euclidean $(n+1)$-space $\mathbb{R}^{n+1}$ which encloses the
origin, $n\geq1$. Li, Sheng and Wang \cite{LSW} investigated the
evolution of  $\mathscr{M}_{0}$ along the geometric flow
\begin{equation}\label{flow1}
\left\{
\begin{aligned}
&\frac{\partial X}{\partial t}(x,t)=-r^{\alpha}\sigma_{k}(x,t)\nu, \\
& X(x,0)=X_{0}(x),\\
\end{aligned}
\right.
\end{equation}
where $\sigma_{k}$ is the $k$-th Weingarten curvature (i.e., the
$k$-th elementary symmetric function of principal curvatures) given
by
\begin{eqnarray*}
 \sigma_{k}(\cdot,t)=\sum\limits_{1\leq i_{1}<i_{2}<\cdots<i_{k}\leq
 n}\lambda_{i_1}\lambda_{i_2}\cdots\lambda_{i_k},
\end{eqnarray*}
with $\lambda_{i}=\lambda_{i}(\cdot,t)$ the principal curvatures of
the hypersurface $\mathscr{M}_{t}$, parameterized by\footnote{~In
this paper, $\mathbb{S}^{n}$ stands for the unit Euclidean
$n$-sphere.} $X(\cdot,t):\mathbb{S}^{n}\rightarrow\mathbb{R}^{n+1}$.
Besides, in the system (\ref{flow1}), $\nu(\cdot,t)$ is the unit
outward  normal vector of $\mathscr{M}_{t}$ at $X(\cdot,t)$,
$\alpha\in\mathbb{R}^{1}$ and $r:=|X(\cdot,t)|$ denotes the radial
distance from the point $X(x,t)$ to the origin. It is not hard to
see that: if $\alpha=0$, $k=1$, the flow (\ref{flow1}) degenerates
into the well-known mean curvature flow;  $\alpha=0$, $k=n$, the
flow (\ref{flow1}) becomes the Gauss curvature flow. The mean
curvature flow and the Gauss curvature flow have been extensively
studied in the past six decades, and some interesting, important
results have been obtained -- see e.g. \cite{BA,KSC,WJF,GH}. BTW, if
$\alpha=0$, then in this setting the evolution equation of
(\ref{flow1}) is simply called $\sigma_{k}$-flow equation and the
speed of the $\sigma_{k}$-flow is just the $k$-th Weingarten
curvature (sometimes, $k$-curvature for abbreviation). If $\alpha$
does not vanish, then the speed of the flow (\ref{flow1}) depends
not only on the curvatures but also on the radial function. As shown
by Li, Sheng and Wang in \cite{LSW}, the motivations of their study
on the flow (\ref{flow1}) include not only the close relation with
those classical curvature flows just mentioned above but also its
background in convex geometry (see \cite[pp. 835-836]{LSW} for an
interesting explanation). For the flow (\ref{flow1}), Li, Sheng and
Wang \cite{LSW} proved that:

\begin{itemize}
\item \emph{If $\alpha\geq k+1$, the flow exists for all time, preserves
the convexity and converges smoothly after normalisation to a sphere
centred at the origin. If $\alpha<k+1$, a counterexample (see
\cite[Sect. 5]{LSW}) is given for the above convergence. In the case
$k=1$ and $\alpha\geq2$, the flow converges to a round point if the
initial hypersurface is weakly mean-convex and star-shaped.}
\end{itemize}
Inspired by the work \cite{LSW}, Mao and his collaborators
\cite{CMTW} considered the evolution of closed, star-shaped,
strictly mean convex $C^{2,\gamma}$ hypersurfaces in
$\mathbb{R}^{n+1}$ along the unit outward  normal vector with a
speed equal to $\left(r^{\alpha}H\right)^{-1}$, where
$H=\sigma_{1}(\cdot,t)$ is the mean curvature of the hypersurfaces,
and proved that if $\alpha\geq0$, this evolution exists for all time
and the evolving hypersurfaces converge smoothly to a round sphere
after rescaling. Clearly, when $\alpha=0$, the expanding flow
considered in \cite{CMTW} degenerates into the well-known inverse
mean curvature flow (IMCF) and then asymptotic convergence
conclusion in \cite{CMTW} certainly covers the convergent result of
the classical IMCF firstly obtained by Gerhardt \cite{Ge90} (or
Urbas \cite{Ur}).

The curvature flows mentioned in this paper so far actually
correspond to the evolution of closed hypersurfaces along some
prescribed curvature flow. What about the situation that the initial
hypersurface has boundary? This implies that when considering some
prescribed curvature flow, we have not only the initial value
condition (IVC) as usual but also the boundary value condition
(BVC). This addition of BVC would definitely increase the difficulty
of getting a priori estimates for the solutions to the prescribed
flow equation.

To the best of our knowledge, the first interesting convergent
result concerning the IMCF with boundary is due to T. Marquardt
\cite{Mar1,Mar}. More precisely, let $M^{n}\subset\mathbb{S}^n$
($n\geq2$) be some piece of $\mathbb{S}^n$ such that
$\Sigma^{n}:=\{rx\in \mathbb{R}^{n+1}| r>0, x\in
\partial M^n\}$ be the boundary of a smooth, convex cone that is centered at
the origin. Marquardt \cite{Mar1,Mar} investigated the evolution of
hypersurfaces with boundary (which are star-shaped with respect to
the center of the cone $\Sigma^{n}$ and which also meet $\Sigma^{n}$
perpendicular) along the IMCF, and they can prove that the long-time
existence of this flow and moreover the evolving hypersurfaces
converge smoothly and exponentially to a piece of the round sphere.
Inspired by the previous work \cite{CMTW}, together with the success
of curvature estimates on the boundary, Mao and Tu \cite{mt}
generalized Marquardt's above conclusion from the IMCF to the
inverse curvature flow with the speed equal to
$\left(r^{\alpha}H\right)^{-1}$, $\alpha\geq0$. Is it possible to
consider some inverse curvature flow with boundary in a more general
geometric space (not the Euclidean space)? The answer is affirmative
and we have obtained several successful cases -- see e.g. \cite{GM1}
for the evolution of spacelike (strictly mean convex) graphic
hypersurfaces along the IMCF with boundary in a time cone of the
Lorentz-Minkowski space $\mathbb{R}^{n+1}_{1}$, \cite{GM} for the
inverse curvature flow with boundary and speed equal to
$\left(r^{\alpha}H\right)^{-1}$ in a time cone of
$\mathbb{R}^{n+1}_{1}$, \cite{GM2} for the inverse Gauss curvature
flow in a time cone of $\mathbb{R}^{n+1}_{1}$.

Very recently, inspired by the work \cite{CMTW} of Mao and his
collaborators, Gong \cite{AHG} found that similar convergent result
can be obtained if the flow speed  $\left(r^{\alpha}H\right)^{-1}$,
$\alpha\geq0$ was replaced by a more general one
$\left(f(r)H\right)^{-1}$, where $f$ is a positive function of the
radial distance parameter $r=|X(\cdot,t)|$ and also satisfies two
constraints involving the first and the second derivatives of $f$
(see \cite[p. 642]{AHG} for details).

It is natural to ask whether or not a similar story could be
expected in the case that the initial hypersurface also has a
boundary when considering the IMCF. The purpose of this paper is
trying to give an affirmative answer to this question. In fact, by
successfully overcoming difficulties in curvature estimates on the
boundary, we can prove the following:

\begin{theorem}\label{main1.1}
Let $M^n\subset\mathbb{S}^{n}$ be some convex piece of the unit
sphere $\mathbb{S}^{n}\subset\mathbb{R}^{n+1}$, and
$\Sigma^n=\{rx\in \mathbb{R}^{n+1}| r>0, x\in
\partial M^n\}$ be the boundary of a smooth, convex cone that is centered at the origin and has outward unit normal $\mu$. Let
$X_{0}:M^{n}\rightarrow\mathbb{R}^{n+1}$ such that
$M_{0}^{n}:=X_{0}(M^{n})$ is a compact, strictly mean convex $C^{2,\gamma}$-hypersurface ($0<\gamma<1$) which is star-shaped with respect to the center of the cone.
 Assume that
 \begin{eqnarray*}
M_{0}^{n}=\mathrm{graph}_{M^n}u_{0}
 \end{eqnarray*}
 is a graph over $M^n$ for a positive
map $u_0: M^n\rightarrow \mathbb{R}$ and
 \begin{eqnarray*}
\partial M_{0}^{n}\subset \Sigma^n, \qquad
\langle\mu\circ X_{0}, \nu_0\circ X_{0} \rangle|_{\partial
M^n}=0,
 \end{eqnarray*}
 where $\nu_0$ is the unit normal vector of $M_{0}^{n}$.
Then we have:

(i) There exists a family of strictly mean convex hypersurfaces $M_{t}^{n}$ given by the unique embedding
\begin{eqnarray*}
X\in C^{2+\gamma,1+\frac{\gamma}{2}} (M^n\times [0,\infty),
\mathbb{R}^{n+1}) \cap C^{\infty} (M^n\times (0,\infty),
\mathbb{R}^{n+1})
\end{eqnarray*}
with $X(\partial M^n, t) \subset \Sigma^n$ for $t\geq 0$, satisfying the following system
\begin{equation}\label{Eq}
\left\{
\begin{aligned}
&\frac{\partial }{\partial t}X=\frac{1}{f(|X|)H}\nu ~~&&in~
M^n \times(0,\infty)\\
&\langle \mu \circ X, \nu\circ X\rangle=0~~&&on~ \partial M^n \times(0,\infty)\\
&X(\cdot,0)=M_{0}^{n}  ~~&& in~M^n
\end{aligned}
\right.
\end{equation}
where $H$ is the mean curvature of
$M_{t}^{n}:=X(M^n,t)=X_{t}(M^{n})$, $\nu$ is the unit outward normal vector of $M_{t}^{n}$, and the function $f(y)\in C^{0}([0,\infty))\cap C^{2}((0,\infty))$ satisfies: \\
(1) $f(y)\geq 0$, $f(y)=0$ if and only if $y=0$;\\
(2) $0<c_{1}\leq yf'(y)f^{-1}(y)\leq c_{2}$ for some constants $c_{1}$ and $c_{2}$, or $f$ is a constant function$\footnote{\rm{~In $\dot{\varphi}$ estimate, we need the condition that $0<c_1\leq yf'(y)f^{-1}(y)\leq c_2$ in order to apply the maximum principle. But when $f$ is a constant function, the third term on the right hand side of the first equation of \eqref{3.4} would not appear any more. So here we give the assumption for two different cases of $f$.}}$;\\
(3) If there exist some positive constants $c_3$, $c_4$ such that
$c_3\leq f(y)\leq c_4$, then exist some positive constant $c_5$,
$c_6$ such that $c_5f(g(t))\leq f(y\cdot g(t))\leq c_6f(g(t))$,
where the function $g(t)$ just depends on the time variable $t$.

Moreover, the H\"{o}lder norm on
the parabolic space $M^n\times(0,\infty)$ is defined in the usual
way (see, e.g., \cite[Note 2.5.4]{Ge3}).

(ii) The leaves $M_{t}^{n}$ are graphs over $M^n$, i.e.,
 \begin{eqnarray*}
M_{t}^{n}=\mathrm{graph}_{M^n}u(\cdot, t).
\end{eqnarray*}

(iii) Moreover, the evolving hypersurfaces converge
smoothly after rescaling to a piece of a round sphere of radius $r_\infty$ , where $r_{\infty}$ satisfies
\begin{eqnarray*}
    \frac{1}{\sup\limits_{M^{n}}u_{0}}\left(\frac{\mathcal{H}^n(M_{0}^{n})}{\mathcal{H}^n(M^{n})}\right)^{\frac{1}{n}}\leq
    r_{\infty}
    \leq\frac{1}{\inf\limits_{M^{n}}u_{0}}\left(\frac{\mathcal{H}^n(M_{0}^{n})}{\mathcal{H}^n(M^{n})}\right)^{\frac{1}{n}},
\end{eqnarray*}
where $\mathcal{H}^n(\cdot)$ stands for the $n$-dimensional
Hausdorff measure of a prescribed Riemannian $n$-manifold.
\end{theorem}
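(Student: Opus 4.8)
The plan is to follow the now-standard scheme for inverse curvature flows with a perpendicular Neumann boundary condition, as in Marquardt \cite{Mar1,Mar}, Mao–Tu \cite{mt}, and Gong \cite{AHG}, but carrying the radial factor $f(|X|)$ through every estimate. First I would reformulate the flow as a scalar parabolic problem: writing $M_t^n=\mathrm{graph}_{M^n}u(\cdot,t)$ and passing to the rescaled radial function (or directly to $\varphi=\log u$), the system \eqref{Eq} becomes a fully nonlinear parabolic PDE for $u$ on the fixed domain $M^n$ with an oblique (in fact, after using the cone structure, a homogeneous Neumann) boundary condition $u_\nu=0$ on $\partial M^n$; this uses that $\Sigma^n$ is a cone, so the reflection of the sphere's geometry across $\Sigma^n$ is compatible with perpendicularity. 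Short-time existence then follows from the standard theory for fully nonlinear parabolic Neumann problems (linearization plus the implicit function theorem in parabolic Hölder spaces), so the real content is the a priori estimates that promote the local solution to a global one.

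The estimates proceed in the usual order. \emph{$C^0$-estimate:} use the evolution equation for $u$ (equivalently $\varphi$) together with the maximum principle; the spherical term and the sign condition (1) on $f$ give upper and lower bounds on $u$ that, after the natural rescaling $\tilde u=u/\Theta(t)$ with $\Theta$ solving the ODE coming from the ``round sphere'' solution, stay pinched — this is where hypothesis (3) on $f$ enters, ensuring $f(y\cdot g(t))\sim f(g(t))$ so that the rescaled equation has controlled coefficients. \emph{$C^1$-estimate:} bound $|Du|$ (the gradient function $v=\sqrt{1+|Du|^2/u^2}$) by the maximum principle; at an interior maximum one argues as in the closed case, and at a boundary maximum one uses the convexity of the cone ($\Sigma^n$ has nonnegative second fundamental form with respect to $\mu$) together with the Neumann condition to rule out the boundary as the location of the maximum — this is the first place the boundary genuinely intervenes. \emph{$C^2$-estimate:} the key quantity is the speed $\dot\varphi$ (or $1/(fH)$); one derives its evolution equation, controls the extra term generated by $f'$ using condition (2) ($0<c_1\le yf'/f\le c_2$, or $f$ constant — exactly the dichotomy flagged in the footnote), and again handles the boundary term via the cone's convexity. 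Then one bounds the principal curvatures from above; equivalently one shows $H$ stays bounded and, since $H>0$ is preserved, bounded below away from zero, giving uniform parabolicity. \emph{Higher regularity:} with uniform parabolicity and the $C^2$ bound in hand, Krylov–Safonov and Schauder theory (in the Neumann setting, e.g.\ as in \cite[Note 2.5.4]{Ge3}) yield uniform $C^{2+\gamma,1+\gamma/2}$ bounds, hence long-time existence and $X\in C^\infty(M^n\times(0,\infty))$.

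For the convergence statement (iii), I would show that the rescaled hypersurfaces $\widetilde M_t^n$ converge: using the pinching of $\tilde u$ and the decay of the gradient and curvature oscillations (obtained by differentiating the evolution equations once more and applying the maximum principle to get exponential-in-$t$ decay of $\mathrm{osc}\,\tilde u$, $|D\tilde u|$, and $|\mathring{A}|$), one concludes $\widetilde M_t^n\to$ a piece of a round sphere of some radius $r_\infty$. The value of $r_\infty$ is identified from a conserved (monotone) quantity: the flow $\partial_t X=\frac{1}{fH}\nu$ is, up to the $f$-factor, volume-expanding at a rate governed by $\int_{M_t^n} \frac{1}{fH}\,d\mu$, but the combination that stays controlled is the ratio $\mathcal H^n(M_t^n)$ compared with $\mathcal H^n(M^n)$ scaled by the radius; passing to the limit in $\frac{\mathcal H^n(M_0^n)}{\mathcal H^n(M^n)}$ versus $\sup u_0$ and $\inf u_0$ gives the two-sided bound on $r_\infty$ stated in the theorem.

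The main obstacle I expect is the $C^2$-estimate at the boundary: one must differentiate the Neumann condition $\langle\mu\circ X,\nu\circ X\rangle=0$ tangentially and normally to convert it into usable boundary data for the second fundamental form, and then control the resulting terms — which now include contributions from $f'(|X|)$ — using only the convexity of the cone and conditions (2)–(3). This is precisely the step that fails for general speeds and where the structural hypotheses on $f$ are indispensable; everything else is a careful but routine adaptation of \cite{mt} and \cite{AHG}.
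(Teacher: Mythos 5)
Your outline tracks the paper through the scalar reduction, the $C^0$ estimate via the spherical comparison solution $\Theta$, the gradient estimate (maximum principle plus convexity of the cone and Hopf's lemma at $\partial M^n$), and the bound on the speed $\dot\varphi\cdot f(\Theta)$ using hypothesis (2). The genuine gap is exactly at the step you flag as the main obstacle: you propose to prove a pointwise bound on the \emph{full} second fundamental form up to the boundary by differentiating the Neumann condition, and then to invoke Krylov--Safonov/Schauder. The paper never proves such a boundary $|A|$ estimate --- this is precisely the step that is problematic for perpendicular free-boundary inverse curvature flows, and the architecture of the proof (following Marquardt) is designed to bypass it. What the paper actually does is: (a) bound only $H\Theta$ from above and below (the corollary following Lemma \ref{Gradient}); (b) show that the rescaled speed $\Omega=vf(\Theta)/(\widetilde u\widetilde H f(\widetilde u\Theta))$ satisfies a divergence-form parabolic equation (Lemma \ref{EVQ}, proved in the Appendix), whose weak formulation has vanishing boundary integrals thanks to the Neumann condition, so that $\Omega$ lies in a De Giorgi class and is H\"{o}lder continuous by the Ladyzhenskaya--Solonnikov--Ural'tseva theory; and (c) feed the resulting H\"{o}lder continuity of the coefficients into \emph{linear} Schauder theory to get $C^{2+\beta,1+\beta/2}$ bounds and then bootstrap. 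Hypotheses (2) and (3) on $f$ enter in controlling the extra $f'$ terms in \eqref{div-form-02} and in comparing $f(\widetilde u\Theta)$ with $f(\Theta)$. Without either supplying the boundary curvature estimate you allude to (which is not established here or in the cited literature for this setting) or replacing it by such an energy/De Giorgi argument for the speed, your proof does not close.

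A secondary discrepancy: for part (iii) you assert exponential decay of the oscillation of $\widetilde u$, of $|D\widetilde u|$ and of the traceless second fundamental form, but no mechanism for this decay is given. The paper instead integrates the first variation formula $P'(t)=\int_{M_t^n}f^{-1}(u)\,d\mathcal H^n$ against the $C^0$ bounds to get $s$-independent two-sided bounds on $\mathcal H^n(\widetilde M_s)$, and combines these with the uniform higher-order estimates and Arzel\`{a}--Ascoli to conclude convergence to a constant $r_\infty$ satisfying \eqref{radius}. Your identification of $r_\infty$ through the area ratio is in the right spirit, but the decay estimates on which your convergence argument rests are unproved.
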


In Remark \ref{remark 1}, Remark \ref{remark 2} and Remark
\ref{remark 3} below, we would give the detailed explanations about
why the function $f$ needs to satisfy three assumptions (1)-(3) in
Theorem \ref{main1.1}.

\begin{remark}
\rm{ It is not hard to construct the following possible examples
which do satisfy the assumptions (1)-(3) in Theorem \ref{main1.1}:\\
$\bullet$ ~~$f(y)=y^{\alpha}$, $\alpha\geq0$; \\
$\bullet$ ~~$f(y)=y+\log (1+y)$; \\
$\bullet$ ~~$f(y)=\frac{e^y \cdot y}{1+e^y}$. }

\end{remark}

\begin{remark}
\rm{It is not hard to find that as long as the function $f$
satisfies three conditions in Theorem \ref{main1.1}, the evolving
hypersurfaces will converge after rescaling to a piece of a round
sphere of certain radius $r_\infty$, which can be controlled by two
positive constants that only related to the initial hypersurface. }

\end{remark}

This paper is organized as follows. In Section \ref{se3}, we will
show that the flow equation (which generally is a system of PDEs)
changes into a single scalar second-order parabolic PDE. In Section
\ref{se4}, several estimates, including $C^0$, time-derivative and
gradient estimates, of solutions to the flow equation will be shown
in details. Estimates of higher-order derivatives of solutions to
the flow equation, which naturally leads to the long-time existence
of the flow, will be investigated in Section \ref{se5}. We will
clearly show the convergence of the rescaled flow in Section
\ref{se6}.

\section{The scalar version of the flow equation} \label{se3}

Since the initial $C^{2,\gamma}$-hypersurface is star-shaped,
there exists
a function $u_0\in C^{2,\gamma} (M^{n})$ such that
 $X_0: M^{n} \rightarrow \mathbb{R}^{n+1}$ has the form $x \mapsto
G_{0}:=(x,u_0(x))$. The hypersurface $M_{t}^{n}$ given by the
embedding
\begin{eqnarray*}
X(\cdot, t): M^{n}\rightarrow \mathbb{R}^{n+1}
\end{eqnarray*}
at time $t$ may be represented as a graph over $M^n\subset
\mathbb{S}^{n}$, and then we can make ansatz
\begin{eqnarray*}
X(x,t)=\left(x,u(x,t)\right)
\end{eqnarray*}
for some function $u: M^{n} \times [0,T) \rightarrow \mathbb{R}$.

Using techniques as in Ecker \cite{Eck} (see also \cite{Ge90, Ge3,
Mar}), the problem \eqref{Eq} is degenerated into solving the
following scalar equation with the corresponding initial data and
the corresponding NBC
\begin{equation}\label{Eq-}
\left\{
\begin{aligned}
&\frac{\partial u}{\partial t}=\frac{v}{f(|X|)H} \qquad &&~\mathrm{in}~
M^n\times(0,\infty)\\
&D_{\mu} u=0  \qquad&&~\mathrm{on}~ \partial M^n\times(0,\infty)\\
&u(\cdot,0)=u_{0} \qquad &&~\mathrm{in}~M^n.
\end{aligned}
\right.
\end{equation}
By \cite[Lemma 2.1]{GM}, define a new function $\varphi(x,t)=\log
u(x, t)$ and then the mean curvature can be rewritten as
\begin{eqnarray*}
H=\sum_{i=1}^{n}h^i_i=\frac{e^{-\varphi}}{
v}\bigg(n-(\sigma^{ij}-\frac{\varphi^{i}\varphi^{j}}{v^{2}})\varphi_{ij}\bigg).
\end{eqnarray*}
Hence, the evolution equation in \eqref{Eq-} can be rewritten as
\begin{eqnarray*}
\frac{\partial}{\partial t}\varphi=f^{-1}(e^{\varphi}) (1+|D\varphi|^2)\frac{1}
{[n-(\sigma^{ij}-\frac{\varphi^{i}
\varphi^{j}}{v^2})\varphi_{ij}]}:=Q(\varphi,D\varphi, D^2\varphi).
\end{eqnarray*}
In particular,
 \begin{eqnarray*}
\left(n-(\sigma^{ij}-\frac{\varphi_0^{i}
\varphi_0^{j}}{v^2})\varphi_{0,ij}\right)
 \end{eqnarray*}
 is positive on $M^n$, since $M_0$ is strictly mean convex.
Thus, the problem \eqref{Eq} is again reduced to solve the following
scalar equation with the NBC and the initial data
\begin{equation}\label{Evo-1}
\left\{
\begin{aligned}
&\frac{\partial \varphi}{\partial t}=Q(\varphi,D\varphi, D^{2}\varphi) \quad
&& \mathrm{in} ~M^n\times(0,T)\\
&D_{\mu} \varphi =0  \quad && \mathrm{on} ~ \partial M^n\times(0,T)\\
&\varphi(\cdot,0)=\varphi_{0} \quad && \mathrm{in} ~ M^n,
\end{aligned}
\right.
\end{equation}
where
$$\left(n-(\sigma^{ij}-\frac{\varphi_0^{i}
\varphi_0^{j}}{v^2})\varphi_{0,ij}\right)$$ is positive on $M^n$.
Clearly, for the initial graphic hypersurface $M_{0}^{n}$,
$$\frac{\partial Q}{\partial \varphi_{ij}}\Big{|}_{\varphi_0}=\frac{1}{e^{2\varphi_{0}} f(e^{\varphi_{0}})H^{2}}(\sigma^{ij}-\frac{\varphi_0^{i}
\varphi_0^{j}}{v^2})$$ is positive on $M^n$, since $f^{-1}(e^{\varphi_{0}})>0$. Based on the above
facts, as in \cite{Ge90, Ge3, Mar}, we can get the following
short-time existence and uniqueness for the parabolic system
\eqref{Eq}.

\begin{lemma}
Let $X_0(M^n)=M_{0}^{n}$ be as in Theorem \ref{main1.1}. Then there
exist some $T>0$, a unique solution  $u \in
C^{2+\gamma,1+\frac{\gamma}{2}}(M^n\times [0,T]) \cap C^{\infty}(M^n
\times (0,T])$, where $\varphi(x,t)=\log u(x,t)$, to the parabolic
system \eqref{Evo-1} with the matrix
 \begin{eqnarray*}
\left(n-(\sigma^{ij}-\frac{\varphi^{i}
\varphi^{j}}{v^2})\varphi_{ij}\right)
 \end{eqnarray*}
positive on $M^n$. Thus there exists a unique map $\psi:
M^n\times[0,T]\rightarrow M^n$ such that $\psi(\partial M^n
,t)=\partial M^n$ and the map $\widehat{X}$ defined by
\begin{eqnarray*}
\widehat{X}: M^n\times[0,T)\rightarrow \mathbb{R}^{n+1}:
(x,t)\mapsto X(\psi(x,t),t)
\end{eqnarray*}
has the same regularity as stated in Theorem \ref{main1.1} and is
the unique solution to the parabolic system \eqref{Eq}.
\end{lemma}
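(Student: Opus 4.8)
The plan is to establish short-time existence and uniqueness for the fully nonlinear Neumann problem \eqref{Evo-1} by verifying that it fits into the standard theory for quasilinear (more precisely, for appropriately concave or uniformly parabolic fully nonlinear) parabolic equations with an oblique boundary condition, and then to transfer this solution back to a solution of the geometric system \eqref{Eq}. First I would observe, as already noted in the excerpt, that at the initial data $\varphi_0=\log u_0$ the matrix $\left(n-(\sigma^{ij}-\varphi_0^i\varphi_0^j/v^2)\varphi_{0,ij}\right)$ is positive on $M^n$ because $M_0^n$ is strictly mean convex, and consequently the linearization $\partial Q/\partial\varphi_{ij}|_{\varphi_0}=\frac{1}{e^{2\varphi_0}f(e^{\varphi_0})H^2}(\sigma^{ij}-\varphi_0^i\varphi_0^j/v^2)$ is positive definite on $M^n$ (here $f^{-1}(e^{\varphi_0})>0$ by assumption (1), since $u_0>0$). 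Hence the equation is \emph{uniformly} parabolic in a neighborhood of the initial data, and by continuity it remains uniformly parabolic for a short time.

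Next I would check that the boundary condition is admissible: $D_\mu\varphi=0$ on $\partial M^n\times(0,T)$ is a linear, homogeneous Neumann condition, and since $\mu$ is the outward unit normal to $\partial M^n$ in $M^n$, the obliqueness constant $\langle\mu,\mu\rangle=1>0$ is satisfied trivially, so the condition is uniformly oblique. The compatibility condition of order zero, $D_\mu\varphi_0=0$ on $\partial M^n$, holds because it is equivalent to $\langle\mu\circ X_0,\nu_0\circ X_0\rangle|_{\partial M^n}=0$, which is part of the hypotheses of Theorem \ref{main1.1}; one should also note that the $C^{2,\gamma}$ regularity of $u_0$, together with the fact that $M^n$ is a smooth compact manifold with smooth boundary, provides the required regularity of the initial-boundary data. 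With uniform parabolicity, uniform obliqueness, smoothness of the coefficients of $Q$ in its arguments (which follows from $f\in C^2((0,\infty))$ and $u_0>0$ bounded away from $0$ and $\infty$ on the compact $M^n$), and zeroth-order compatibility, the standard existence theory for such Neumann problems — as developed in Gerhardt's book and as used by Marquardt in the IMCF setting, cf.\ \cite{Ge90,Ge3,Mar} — yields a unique solution $\varphi\in C^{2+\gamma,1+\gamma/2}(M^n\times[0,T])$ on some short time interval $[0,T]$, which moreover becomes $C^\infty$ in $M^n\times(0,T]$ by parabolic bootstrapping (interior plus up-to-the-boundary Schauder estimates, since the boundary condition and the equation have smooth structure). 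Setting $u=e^\varphi$ gives the corresponding solution of \eqref{Eq-}, and hence of \eqref{Evo-} with the claimed regularity and the positivity of $\left(n-(\sigma^{ij}-\varphi^i\varphi^j/v^2)\varphi_{ij}\right)$ maintained by continuity on a possibly smaller interval.

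Finally, to obtain a genuine solution of the \emph{geometric} flow \eqref{Eq} — in which $\partial X/\partial t$ is required to be purely normal — I would perform the standard tangential reparametrization: the graph solution $X(x,t)=(x,u(x,t))$ solves \eqref{Eq-} with velocity having both normal and tangential components, and one constructs a time-dependent family of diffeomorphisms $\psi(\cdot,t):M^n\to M^n$ by solving the ODE $\partial_t\psi=-$(tangential part of the velocity, pulled back), with $\psi(\cdot,0)=\mathrm{id}$; because the Neumann condition forces the velocity field to be tangent to $\Sigma^n$ along $\partial M^n$ (this is exactly what $\langle\mu\circ X,\nu\circ X\rangle=0$ encodes), the flow of this vector field preserves $\partial M^n$, so $\psi(\partial M^n,t)=\partial M^n$. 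Then $\widehat X(x,t):=X(\psi(x,t),t)$ has purely normal velocity, inherits the regularity of $X$ and $\psi$, satisfies $\widehat X(\partial M^n,t)\subset\Sigma^n$, and is the unique solution to \eqref{Eq}; uniqueness for \eqref{Eq} reduces to uniqueness for \eqref{Evo-1} because any two solutions of \eqref{Eq} give rise, via their star-shapedness, to solutions of the scalar problem that must coincide. The main obstacle in a fully detailed write-up is not the existence theorem itself — that is essentially a citation to \cite{Ge3} — but rather carefully checking that the oblique boundary condition together with the geometry of the cone $\Sigma^n$ indeed makes the linearized problem a well-posed regular oblique-derivative parabolic problem near $t=0$, and that the reparametrization $\psi$ is globally well-defined on $[0,T)$ with the stated boundary-preserving property; both are routine but must be stated with care.
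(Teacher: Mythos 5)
Your proposal follows essentially the same route as the paper, which gives no detailed proof here but simply notes the positivity of the linearization $\partial Q/\partial\varphi_{ij}|_{\varphi_0}$ at the initial data and cites \cite{Ge90,Ge3,Mar} for the short-time existence, uniqueness and the reparametrization step. Your write-up just fills in the standard details (uniform parabolicity near $t=0$, obliqueness and compatibility of the Neumann condition, tangential reparametrization $\psi$), all of which are consistent with the cited framework.
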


Let $T^{\ast}$ be the maximal time such that there exists some
 \begin{eqnarray*}
u\in C^{2+\gamma,1+\frac{\gamma}{2}}(M^n\times[0,T^{\ast}))\cap
C^{\infty}(M^n\times(0,T^{\ast}))
 \end{eqnarray*}
  which solves \eqref{Evo-1}. In the
sequel, we shall prove a priori estimates for those admissible
solutions on $[0,T]$ where $T<T^{\ast}$.

\section{$C^0$, $\dot{\varphi}$ and gradient estimates} \label{se4}

\begin{lemma}[\bf$C^0$ estimate]\label{lemma3.1}
Let $\varphi$ be a solution of \eqref{Evo-1}, we have
\begin{equation*}
c_7 \leq u(x, t) \Theta^{-1}(t, c) \leq c_8, \qquad\quad \forall~
x\in M^n, \ t\in[0,T]
\end{equation*}
for some positive constants $c_7$, $c_8$, where $\Theta(t, c):=e^{\mathcal{F}(t)+c}$ with
$$\inf_{M^{n}}\varphi(\cdot,0)\leq c\leq \sup_{M^{n}}\varphi(\cdot,0),$$
where the definition of $\mathcal{F}(t)$ will be given later.
\end{lemma}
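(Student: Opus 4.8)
The plan is to derive an ODE comparison for the quantity $\Theta(t,c)=e^{\mathcal{F}(t)+c}$ by viewing a spatially constant function as a solution of an associated ODE and then invoking the parabolic maximum principle with the Neumann boundary condition. More precisely, writing $\varphi=\log u$, one notices that if $\varphi$ were independent of $x$, then $D\varphi=0$, $D^2\varphi=0$, and the evolution equation $\partial_t\varphi=Q(\varphi,D\varphi,D^2\varphi)$ degenerates to $\dot{\varphi}=\tfrac{1}{n}f^{-1}(e^{\varphi})$. The natural choice is therefore $\mathcal{F}(t)$ defined implicitly by $\mathcal{F}'(t)=\tfrac{1}{n}f^{-1}\bigl(e^{\mathcal{F}(t)+c}\bigr)$ with $\mathcal{F}(0)=0$, so that $\psi(x,t):=\mathcal{F}(t)+c$ solves the same PDE with homogeneous Neumann data. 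Taking $c=\sup_{M^n}\varphi(\cdot,0)$ gives an upper barrier and $c=\inf_{M^n}\varphi(\cdot,0)$ a lower barrier.

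First I would set up the comparison: let $w=\varphi-\psi$, compute that $\partial_t w$ equals a difference of $Q$ evaluated at two arguments, and use the (established) ellipticity of $Q$ — i.e. that $\partial Q/\partial\varphi_{ij}$ is positive definite along admissible solutions, which is exactly the strict mean convexity preserved by the flow — to write this difference in the form $a^{ij}w_{ij}+b^i w_i+ c\, w$ for suitable coefficients via the mean value theorem along the segment joining the two jets. One must check the zeroth-order coefficient: since $f^{-1}(e^\varphi)$ is monotone in $\varphi$ when $yf'(y)f^{-1}(y)>0$ (assumption (2)), the term $\partial Q/\partial\varphi$ has a sign that is compatible with the maximum principle (or one absorbs it by the standard trick of considering $e^{-\lambda t}w$). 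Then the parabolic maximum principle for the Neumann problem on $M^n\times[0,T]$ — using that $D_\mu w = D_\mu\varphi - D_\mu\psi = 0$ on the boundary, so no spurious boundary maximum can occur — yields $\inf_{M^n}\varphi(\cdot,0)\le\varphi(x,t)-\mathcal{F}(t)\le\sup_{M^n}\varphi(\cdot,0)$, hence $\varphi(x,t)-\mathcal{F}(t)-c$ is bounded between $\inf\varphi_0-\sup\varphi_0$ and $0$ for the respective choices of $c$. Exponentiating and recalling $u=e^\varphi$, $\Theta=e^{\mathcal{F}(t)+c}$ gives $c_7\le u\,\Theta^{-1}\le c_8$ with $c_7=e^{\inf\varphi_0-\sup\varphi_0}$-type constants depending only on $\varphi_0$ via $\mathrm{osc}\,\varphi_0$.

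The step I expect to be the main obstacle is handling the Neumann boundary term rigorously: one has to verify that the barrier $\psi$, being spatially constant, trivially satisfies $D_\mu\psi=0$, and that the Hopf-type boundary point lemma applies in this geometric setting where $\mu$ is the outward normal of the cone $\Sigma^n$ and is tangent to the flow hypersurfaces — this is precisely where the perpendicularity condition is used, and it is the same mechanism that makes the Neumann problem well posed. A secondary technical point is the monotonicity/sign of the reaction term coming from $\partial_\varphi Q$; the footnote in the theorem statement signals that assumption (2) ($0<c_1\le yf'f^{-1}\le c_2$) is exactly what is needed here, so I would split into the two cases ($f$ non-constant versus $f$ constant) as the authors indicate, the constant case being immediate since then $f^{-1}(e^\varphi)$ contributes nothing $\varphi$-dependent to worry about. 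Finally, one should record that $\mathcal{F}(t)$ is well defined for all $t\ge0$: since $f^{-1}$ is positive and, by assumption (2), $f$ grows at most polynomially, the ODE $\mathcal{F}'=\tfrac1n f^{-1}(e^{\mathcal{F}+c})$ does not blow up in finite time, which is what allows the estimate to persist on all of $[0,T]$, $T<T^\ast$.
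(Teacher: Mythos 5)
Your proposal follows essentially the same route as the paper: the spatially homogeneous solution $\mathcal{F}(t)+c$ of the reduced ODE $\dot{\varphi}=\tfrac{1}{n}f^{-1}(e^{\varphi})$ serves as upper/lower barrier (with $c=\sup\varphi_0$ resp.\ $c=\inf\varphi_0$), and the parabolic maximum principle together with the trivially satisfied Neumann condition of the barrier yields $\mathcal{F}(t)+\inf\varphi_0\le\varphi\le\mathcal{F}(t)+\sup\varphi_0$; you merely spell out the linearization, the sign of the zeroth-order coefficient, and the Hopf boundary argument that the paper leaves implicit. The only point worth adding is that $\mathcal{F}$ depends on the choice of $c$, but since $f$ is increasing (assumption (2)) the difference of two ODE solutions is nonincreasing in $t$, so the barriers for different admissible $c$ stay within $\operatorname{osc}\varphi_0$ of one another and the constants $c_7,c_8$ indeed depend only on the initial data.
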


\begin{proof}
Let $\varphi(x, t)=\varphi(t)$ (independent of $x$) be  the solution
of \eqref{Evo-1} with $\varphi(0)=c$. In this case, the first
equation in \eqref{Evo-1} reduces to an ODE
\begin{eqnarray*}
\frac{d}{d t}\varphi(t)=\frac{1}{nf(e^{\varphi(t)})}>0.
\end{eqnarray*}
So we also have
\begin{eqnarray*}
  \frac{dt}{d \varphi}=nf(e^{\varphi(t)})>0.
  \end{eqnarray*}
Therefore, using the inverse function theorem, there exists the
function $\mathcal{F}(t)$, and $\mathcal{F}'(t)>0$. One might
further request $\mathcal{F}(0) = 0$, and so
\begin{eqnarray}\label{blow}
  \varphi(t) = \mathcal{F}(t) + c.
\end{eqnarray}
Using the maximum principle, we can obtain that
\begin{equation}\label{C^0}
\mathcal{F}(t)+\varphi_{1}\leq\varphi(x, t)
\leq\mathcal{F}(t)+\varphi_{2},
\end{equation}
where $\varphi_1:=\inf_{M^n}\varphi(\cdot,0)$ and
$\varphi_2:=\sup_{M^n} \varphi(\cdot,0)$. The estimate is obtained
since $\varphi =\log u$.
\end{proof}

\begin{remark}\label{remark 1}
\rm{Comparing with \cite[Lemma 3.1]{GM}, here since
$e^{\varphi(t)}\neq 0$, we have $f(e^{\varphi(t)})>0$ (the first
assumption in Theorem \ref{main1.1}). Then using the existence
theorem of inverse functions, we know that there exists a function
$\mathcal{F}(t)$ such that $\varphi(t) = \mathcal{F}(t) + c$ without
the need of explicit expression. }
\end{remark}

\begin{lemma}[\bf$\dot{\varphi}$ estimate]\label{lemma3.2}
Let $\varphi$ be a solution of \eqref{Evo-1} and $\Sigma^n$ be the
boundary of a smooth, convex domain defined as in Theorem
\ref{main1.1}, then
\begin{eqnarray*}
\min\left\{\inf_{M^n}(\dot{\varphi}(\cdot, 0)\cdot f(\Theta(0))), \frac{c_{1}}{nc_{2}}\right\} \leq \dot{\varphi}(x, t)f(\Theta(t))\leq
\max\left\{\sup_{M^{n}}(\dot{\varphi}(\cdot, 0)\cdot f(\Theta(0))), \frac{c_{1}}{nc_{2}}\right\}.
\end{eqnarray*}
\end{lemma}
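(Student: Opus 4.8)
The plan is to derive a parabolic equation satisfied by the quantity $w:=\dot\varphi\cdot f(\Theta(t))$ (or a suitable variant thereof) and then apply the parabolic maximum principle with Neumann boundary data, exactly in the spirit of the gradient/speed estimates in \cite{CMTW,GM,Mar}. First I would differentiate the evolution equation $\partial_t\varphi=Q(\varphi,D\varphi,D^2\varphi)$ in time to obtain a linear parabolic equation for $\dot\varphi$ of the form
\begin{equation*}
\partial_t\dot\varphi = Q^{ij}\dot\varphi_{ij}+Q^i\dot\varphi_i+Q_\varphi\,\dot\varphi,
\end{equation*}
where $Q^{ij}=\partial Q/\partial\varphi_{ij}=f^{-1}(e^\varphi)\,\tfrac{v^2}{H^2 e^{2\varphi}}\big(\sigma^{ij}-\varphi^i\varphi^j/v^2\big)$ is positive definite on $M^n$, $Q^i=\partial Q/\partial\varphi_i$, and the zeroth-order coefficient is $Q_\varphi=\partial Q/\partial\varphi$. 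The key structural computation is $Q_\varphi$: since $Q=f^{-1}(e^\varphi)\,\widetilde Q(D\varphi,D^2\varphi)$ with $\widetilde Q=(1+|D\varphi|^2)/[n-(\sigma^{ij}-\varphi^i\varphi^j/v^2)\varphi_{ij}]$ depending on $\varphi$ only through its derivatives, we get $Q_\varphi=-\,e^\varphi f'(e^\varphi)f^{-2}(e^\varphi)\,\widetilde Q = -\,\big(e^\varphi f'(e^\varphi)f^{-1}(e^\varphi)\big)\cdot Q = -\,\big(yf'(y)f^{-1}(y)\big)\big|_{y=e^\varphi}\cdot\dot\varphi/\dot\varphi\cdot\partial_t\varphi$; more precisely $Q_\varphi\dot\varphi = -\,a(t,x)\,\dot\varphi^2/\text{(something)}$ — the upshot being that the sign of $Q_\varphi$ is controlled by the pinching hypothesis $0<c_1\le yf'(y)f^{-1}(y)\le c_2$. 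This is precisely where condition~(2) of Theorem~\ref{main1.1} enters, as the footnote to that condition anticipates.

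Next I would introduce the weight $f(\Theta(t))$, using that by Lemma~\ref{lemma3.1} the radial variable $|X|=u$ is comparable to $\Theta(t)=e^{\mathcal F(t)+c}$, so $f(e^\varphi)$ and $f(\Theta(t))$ differ by controlled multiplicative constants — here assumption~(3) (the scaling-invariance-type bound $c_5 f(g(t))\le f(y\,g(t))\le c_6 f(g(t))$) is what makes $w=\dot\varphi f(\Theta(t))$ behave like $\dot\varphi f(e^\varphi)=v/H$ up to constants. Computing $\partial_t w$ from the $\dot\varphi$ equation, the time derivative of the weight produces an extra term $\dot\varphi\,f'(\Theta)\,\Theta'(t)$; since $\mathcal F'(t)=\big(nf(\Theta)\big)^{-1}$ from the ODE in the proof of Lemma~\ref{lemma3.1}, this extra term is of the form $w\cdot\big(\Theta f'(\Theta)f^{-1}(\Theta)\big)/(nf(\Theta))$, again bounded by $c_1,c_2$ from condition~(2). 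Combining, $w$ satisfies a parabolic inequality whose zeroth-order coefficient has a definite sign away from the equilibrium value $w=c_1/(nc_2)$ (or more accurately: the source structure forces $w$ to be squeezed between its initial bounds and the value $\tfrac{c_1}{nc_2}$ corresponding to the homothetically-shrinking model solution $\varphi=\mathcal F(t)+c$, for which $\dot\varphi=(nf(\Theta))^{-1}$ so $\dot\varphi f(\Theta)=1/n$, with the $c_1,c_2$ accounting for the slack in condition~(2)). Then the maximum principle gives the two-sided bound stated.

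For the boundary, I would verify that $w$ satisfies a homogeneous Neumann condition on $\partial M^n$: differentiating the Neumann condition $D_\mu\varphi=0$ in time gives $D_\mu\dot\varphi=0$, and since $\Theta(t)$ is spatially constant, $D_\mu w = f(\Theta(t))D_\mu\dot\varphi=0$ — so no boundary gradient term obstructs the maximum principle, and no Hopf-lemma contribution at an interior-boundary extremum arises (this is the same mechanism as in \cite{Mar,mt,GM}). The main obstacle I anticipate is not the maximum-principle step itself but bookkeeping the precise constant: tracking how the constants $c_1,\dots,c_6$ from the three hypotheses on $f$ propagate through $Q_\varphi$ and through the weight derivative, so that the extremal constant comes out exactly as $c_1/(nc_2)$ rather than merely "some constant". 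A secondary technical point is justifying the maximum principle up to the boundary in the $C^{2+\gamma,1+\gamma/2}$ setting and handling the spatial-maximum-over-$\partial M^n$ case via the Neumann condition plus convexity of the cone (so that the Hopf term has the right sign); but these are standard given the cited references, so I would treat them briefly.
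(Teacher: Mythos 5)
Your proposal is correct and follows essentially the same route as the paper: the paper likewise sets $\mathcal{M}=\dot{\varphi}\,f(\Theta)$, derives the parabolic equation \eqref{3.4} whose zeroth-order term is exactly the combination $f^{-1}(\Theta)\bigl[\tfrac{\Theta f'(\Theta)}{nf(\Theta)}-\tfrac{e^{\varphi}f'(e^{\varphi})}{f(e^{\varphi})}\mathcal{M}\bigr]\mathcal{M}$ you identify (the first piece from differentiating the weight via $\mathcal{F}'(t)=(nf(\Theta))^{-1}$, the second from $Q_{\varphi}=-e^{\varphi}f'(e^{\varphi})f^{-1}(e^{\varphi})Q$), and then applies the maximum principle with the homogeneous Neumann condition using assumption (2). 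Your aside invoking assumption (3) is not needed here (the paper uses it only later, in the H\"older estimates), but this does not affect the argument.
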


\begin{proof}
Set
\begin{eqnarray*}
\mathcal{M}(x,t)=\dot{\varphi}(x, t)\cdot f(\Theta).
\end{eqnarray*}
Differentiating both sides of the first evolution equation of
\eqref{Evo-1}, it is easy to get that
\begin{equation} \label{3.4}
\left\{
\begin{aligned}
&\frac{\partial\mathcal{M}}{\partial t}=
Q^{ij}D_{ij}\mathcal{M}+Q^{k}D_k \mathcal{M} \\
&\qquad\qquad
+ f^{-1}(\Theta)\left[\frac{\Theta}{nf(\Theta)}\cdot\frac{\partial f}{\partial\Theta} - \frac{e^{\varphi}}{f(e^{\varphi})}\cdot\frac{\partial f}{\partial e^{\varphi}}\cdot \mathcal{M}\right]\mathcal{M} \quad
&& \mathrm{in} ~M^n\times(0,T)\\
&D_{\mu}\mathcal{M}=0 \quad && \mathrm{on} ~\partial M^n\times(0,T)\\
&\mathcal{M}(\cdot,0)=\dot{\varphi}_0 \quad && \mathrm{on} ~ M^n,
\end{aligned}
\right.
\end{equation}
where $Q^{ij}:=\frac{ \partial Q}{\partial \varphi_{ij}}$
 and $Q^k:=\frac{ \partial Q}{\partial \varphi_{k}}$. Then use the second assumption of the function $f$ in the Theorem \ref{main1.1}, so we have
 \begin{equation*}
  c_1\leq \frac{e^{\varphi}}{f(e^{\varphi})}\cdot\frac{\partial f}{\partial e^{\varphi}}\leq c_2.
 \end{equation*}
Then the result follows from the maximum principle.
\end{proof}

\begin{remark}\label{remark 2}
\rm{In Lemma \ref{lemma3.2}, since we want to use the maximum
principle, we should need the function $f$ to satisfy the assumption
(2) in Theorem \ref{main1.1}. But when $f$ is a constant function,
the third term in the right hand side of the equation \eqref{3.4}
will disappear. Especially, if $f=1$, the equation \eqref{Eq} is the
classical inverse mean curvature flow.}
\end{remark}

\begin{lemma}[\bf Gradient estimate]\label{Gradient}
Let $\varphi$ be a solution of \eqref{Evo-1} and $\Sigma^n$ be the boundary of a smooth, convex domain described as in Theorem
\ref{main1.1}. Then we have,
\begin{equation}\label{Gra-est}
|D\varphi|\leq \sup_{M^n}|D\varphi(\cdot, 0)|, \qquad\quad
\forall~ x\in M^n, \ t\in[0,T].
\end{equation}
\end{lemma}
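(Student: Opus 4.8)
The plan is to prove the gradient bound by a maximum-principle argument applied to the quantity $w:=\tfrac12|D\varphi|^2$, where $|D\varphi|^2=\sigma^{ij}\varphi_i\varphi_j$ is computed with respect to the fixed background metric $\sigma_{ij}$ on $M^n\subset\mathbb S^n$. First I would differentiate the scalar evolution equation $\partial_t\varphi=Q(\varphi,D\varphi,D^2\varphi)$ in a spatial direction, contract with $\varphi^k$, and commute covariant derivatives (picking up curvature terms from $\mathbb S^n$, which since the sectional curvature is $+1$ contribute a sign-definite term). The outcome is a parabolic inequality of the schematic form
\begin{equation*}
\frac{\partial w}{\partial t}=Q^{ij}D_{ij}w+Q^kD_kw+(\text{lower-order terms in }\varphi,D\varphi)\cdot w+(\text{a term from }\partial Q/\partial\varphi),
\end{equation*}
where the crucial point is that the zeroth-order coefficient coming from $\partial Q/\partial\varphi$ has a favorable sign. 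Indeed, from $Q=f^{-1}(e^{\varphi})(1+|D\varphi|^2)[n-(\sigma^{ij}-\varphi^i\varphi^j/v^2)\varphi_{ij}]^{-1}$ one sees that differentiating the factor $f^{-1}(e^{\varphi})$ in $\varphi$ produces $-e^{\varphi}f'(e^{\varphi})f^{-2}(e^{\varphi})Q$, and by assumption (2) the quantity $e^{\varphi}f'(e^{\varphi})f^{-1}(e^{\varphi})$ is nonnegative (indeed $\ge c_1>0$, or $f$ constant), so this contribution to the coefficient of $w$ is $\le 0$. The remaining zeroth-order terms arising from differentiating $(1+|D\varphi|^2)$ and the mean-curvature bracket in $\varphi$ vanish (those factors do not depend on $\varphi$ explicitly, only on its derivatives), so they do not enter as a coefficient of $w$ but only through the $Q^{ij}$, $Q^k$ transport part and terms already proportional to $w$.

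Next I would handle the boundary term. At a boundary maximum of $w$ on $\partial M^n$, the Neumann condition $D_\mu\varphi=0$ together with the convexity of the cone $\Sigma^n$ (equivalently, nonnegativity of the second fundamental form of $\partial M^n\subset\mathbb S^n$ with respect to $\mu$) forces $D_\mu w\le 0$ there; this is the standard computation, e.g. as in Marquardt \cite{Mar} or in \cite{GM,mt}, using $D_\mu w=\varphi^i D_i D_\mu\varphi-\varphi^i\varphi^j(\nabla_i\mu)_j$ and the fact that $D_iD_\mu\varphi=D_\mu D_i\varphi$ up to boundary-curvature corrections. Hence Hopf's lemma prevents an interior-in-time maximum of $w$ from being attained on the boundary unless $w$ is constant, and we may run the maximum principle on the closed manifold $M^n$ as if there were no boundary. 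Combining the parabolic inequality (whose zeroth-order coefficient on $w$ is bounded above, so that $e^{-Ct}\max_{M^n}w$ is nonincreasing, or more simply: at an interior spatial maximum of $w$ we have $D_{ij}w\le 0$, $D_kw=0$, so $\partial_t w\le(\text{nonpositive})\cdot w$) we conclude $\max_{M^n\times\{t\}}w\le\max_{M^n\times\{0\}}w$, which is exactly \eqref{Gra-est}.

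The main obstacle I anticipate is bookkeeping in the commutation step: differentiating $Q$ twice through the nonlinear bracket $[n-(\sigma^{ij}-\varphi^i\varphi^j/v^2)\varphi_{ij}]^{-1}$ produces many terms involving $\varphi_{ij}$ and $\varphi_{ijk}$, and one must verify that every term not manifestly of the form ``$Q^{ij}D_{ij}w+Q^kD_kw$'' plus ``(bounded coefficient)$\cdot w$'' either cancels or has the right sign. The key structural fact that makes this work is that $Q$ depends on $\varphi$ (as opposed to $D\varphi$, $D^2\varphi$) only through the single prefactor $f^{-1}(e^{\varphi})$, so the potentially dangerous zeroth-order term is exactly the one controlled by hypothesis (2); all other $\varphi$-dependence is through derivatives and thus gets absorbed into the transport operator or into terms proportional to $w$ with coefficients bounded in terms of the already-established $C^0$ and $\dot\varphi$ estimates (Lemmas \ref{lemma3.1} and \ref{lemma3.2}). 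A secondary technical point is that the background curvature terms from $\mathbb S^n$ must be checked to have the correct sign or at worst to be absorbable; since $\mathbb S^n$ has positive curvature these Ricci/sectional-curvature contributions are benign for an upper bound on $|D\varphi|^2$.
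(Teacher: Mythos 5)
Your proposal follows essentially the same route as the paper: a maximum principle applied to $\psi=\tfrac12|D\varphi|^2$, commuting derivatives to pick up the (favorably signed) $\mathbb S^n$ curvature term $-Q^{ij}(\sigma_{ij}|D\varphi|^2-\varphi_i\varphi_j)$, using hypothesis (2) to control the zeroth-order term $-e^{\varphi}f'(e^{\varphi})f^{-1}(e^{\varphi})Q|D\varphi|^2$ coming from the $\varphi$-dependence of $f^{-1}(e^{\varphi})$, and handling the boundary via the Neumann condition plus convexity of the cone exactly as in \cite{GM,Mar}. The only quibble is a harmless bookkeeping slip ($\partial Q/\partial\varphi=-e^{\varphi}f'(e^{\varphi})f^{-1}(e^{\varphi})Q$, not $f^{-2}$ times $Q$), which does not affect the sign argument.
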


\begin{proof}
Set $\psi=\frac{|D \varphi|^2}{2}$. By differentiating  $\psi$, we
have
\begin{equation*}
\begin{aligned}
\frac{\partial \psi}{\partial t} =\frac{\partial}{\partial
t}\varphi_m \varphi^m = \dot{\varphi}_m\varphi^m =Q_m \varphi^m.
\end{aligned}
\end{equation*}
Then using the evolution equation of $\varphi$ in (\ref{Evo-1})
yields
\begin{eqnarray*}
\frac{\partial \psi}{\partial t}=Q^{ij}\varphi_{ijm} \varphi^m
+Q^k\varphi_{km} \varphi^m - e^{\varphi}f'(e^{\varphi})f^{-1}(e^{\varphi})Q|D\varphi|^{2}.
\end{eqnarray*}
Interchanging the covariant derivatives, we have
\begin{equation*}
\begin{aligned}
\psi_{ij}&=D_j(\varphi_{mi} \varphi^m)\\&=\varphi_{mij} \varphi^m+\varphi_{mi} \varphi^m_j\\
&=(\varphi_{ijm}+R^l_{jmi}\varphi_{l})\varphi^m+\varphi_{mi}\varphi^m_j.
\end{aligned}
\end{equation*}
Therefore, we can express $\varphi_{ijm} \varphi^m$ as
\begin{eqnarray*}
\varphi_{ijm} \varphi^m =\psi_{ij}-R^l_{jmi}\varphi_l
\varphi^m-\varphi_{mi} \varphi^m_j.
\end{eqnarray*}
Then, in view of the fact
$R_{ijml}=\sigma_{im}\sigma_{jl}-\sigma_{il}\sigma_{jm}$ on
$\mathbb{S}^{n}$, we have
\begin{equation}\label{gra}
\begin{aligned}
\frac{\partial \psi}{\partial t}&=Q^{ij}\psi_{ij}+Q^k \psi_k
-Q^{ij}(\sigma_{ij}|D\varphi|^2-\varphi_i
\varphi_j)\\&-Q^{ij}\varphi_{mi}
\varphi^{m}_{j} - e^{\varphi}f'(e^{\varphi})f^{-1}(e^{\varphi})Q|D\varphi|^{2}.
\end{aligned}
\end{equation}
Same discussion as in \cite[Lemma 3.3]{GM}, since $M^n$ is convex
and using the second assumption in Theorem \ref{main1.1},
$e^{\varphi}f'(e^{\varphi})f^{-1}(e^{\varphi})$ is positive, and so
we can get
\begin{equation*}
\left\{
\begin{aligned}
&\frac{\partial \psi}{\partial t}\leq Q^{ij}\psi_{ij}+Q^k\psi_k
\qquad &&\mathrm{in}~
M^n\times(0,T)\\
&D_{\mu} \psi \leq 0   && \mathrm{on}~\partial M^n\times(0,T)\\
&\psi(\cdot,0)=\frac{|D\varphi(\cdot,0)|^2}{2} \qquad
&&\mathrm{in}~M^n.
\end{aligned}\right .\end{equation*}
Using the maximum principle, we have
\begin{equation*}
|D\varphi|\leq \sup_{M^n}|D\varphi(\cdot, 0)|.
\end{equation*}
Our proof is finished.
\end{proof}

\begin{remark}
\rm{It is worth pointing out that the evolution hypersurfaces $M_t^n$ are always star-shaped under the assumption of Theorem \ref{main1.1}, since, by Lemma \ref{Gradient}, we have
$$\left\langle \frac{X}{|X|}, \nu \right\rangle = \frac{1}{v}$$
is bounded from below by a positive constant.}
\end{remark}

Combing the gradient estimate with $\dot{\varphi}$ estimate, we can
obtain
\begin{corollary}
If $\varphi$ satisfies \eqref{Evo-1}, then we have
\begin{eqnarray}\label{w-ij}
0<c_9\leq  H\Theta \leq c_{10}<+\infty,
\end{eqnarray}
where $c_9$ and $c_{10}$ are positive constants independent of
$\varphi$.
\end{corollary}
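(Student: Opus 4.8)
The corollary combines the $\dot\varphi$ estimate of Lemma \ref{lemma3.2} with the gradient estimate of Lemma \ref{Gradient}, together with the $C^0$ estimate of Lemma \ref{lemma3.1}. The plan is to start from the evolution equation written in the $\varphi$-form, namely
\begin{eqnarray*}
\dot\varphi = Q(\varphi, D\varphi, D^2\varphi) = f^{-1}(e^{\varphi})\,\frac{1+|D\varphi|^2}{n - (\sigma^{ij} - \frac{\varphi^i\varphi^j}{v^2})\varphi_{ij}},
\end{eqnarray*}
and to recall from the preliminary reduction in Section \ref{se3} that the mean curvature satisfies
\begin{eqnarray*}
H = \frac{e^{-\varphi}}{v}\Big(n - (\sigma^{ij} - \tfrac{\varphi^i\varphi^j}{v^2})\varphi_{ij}\Big),
\end{eqnarray*}
so that, using $v^2 = 1 + |D\varphi|^2$,
\begin{eqnarray*}
\dot\varphi\, f(e^{\varphi}) = \frac{1+|D\varphi|^2}{n - (\sigma^{ij} - \frac{\varphi^i\varphi^j}{v^2})\varphi_{ij}} = \frac{v^2}{v\, e^{\varphi} H} = \frac{v}{e^{\varphi} H}.
\end{eqnarray*}
Hence $e^{\varphi} H = v\,(\dot\varphi\, f(e^{\varphi}))^{-1}$, which already expresses $e^{\varphi}H$ purely in terms of quantities we control.

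Next I would convert $e^{\varphi}H$ into $H\Theta$. By Lemma \ref{lemma3.1} we have $c_7 \le u\,\Theta^{-1}(t,c) \le c_8$, i.e. $e^{\varphi} = u$ is comparable to $\Theta(t)$ up to the fixed positive constants $c_7, c_8$; therefore $H\Theta$ and $H e^{\varphi} = uH$ differ by a bounded factor, and it suffices to bound $uH = e^{\varphi}H$ from above and below by positive constants. For the lower bound of $\dot\varphi\, f(e^{\varphi})$: Lemma \ref{lemma3.2} bounds $\dot\varphi\, f(\Theta(t))$ from below and above by positive constants (note the lower bound $\min\{\inf_{M^n}(\dot\varphi(\cdot,0)f(\Theta(0))), c_1/(nc_2)\}$ is positive since $\dot\varphi(\cdot,0) = Q|_{\varphi_0} > 0$ by strict mean convexity, and $c_1/(nc_2) > 0$), and since $f(e^{\varphi})$ and $f(\Theta(t))$ are comparable — using assumption (2) one has $yf'(y)f^{-1}(y) \le c_2$, which integrated between $e^{\varphi}$ and $\Theta$ over the $\Theta$-controlled range gives two-sided bounds on $f(e^{\varphi})/f(\Theta)$, while in the constant-$f$ case this is trivial — we obtain $0 < a_1 \le \dot\varphi\, f(e^{\varphi}) \le a_2$ for positive constants $a_1, a_2$. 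Finally, the gradient estimate of Lemma \ref{Gradient} gives $1 \le v = \sqrt{1+|D\varphi|^2} \le v_0 := \sqrt{1 + \sup_{M^n}|D\varphi(\cdot,0)|^2}$. Combining,
\begin{eqnarray*}
\frac{1}{a_2} \le e^{\varphi} H = \frac{v}{\dot\varphi\, f(e^{\varphi})} \le \frac{v_0}{a_1},
\end{eqnarray*}
and then multiplying through by the comparability $e^{\varphi} \sim \Theta$ from Lemma \ref{lemma3.1} yields $0 < c_9 \le H\Theta \le c_{10} < \infty$ with $c_9, c_{10}$ depending only on the initial data and the structural constants $c_1, c_2, c_7, c_8$.

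The one genuinely delicate point — and the step I would flag as the main obstacle — is the passage from $f(\Theta(t))$ to $f(e^{\varphi(x,t)})$, i.e. showing these are comparable uniformly in $x$ and $t$. This is exactly where assumption (2) is used: from $c_1 \le y f'(y)/f(y) \le c_2$ one gets $(\log f)' \in [c_1/y, c_2/y]$, so for $e^{\varphi}$ and $\Theta$ lying in a ratio controlled by $[c_7, c_8]$ (Lemma \ref{lemma3.1}) one integrates to get $\log\big(f(e^{\varphi})/f(\Theta)\big) \in [c_2 \log(c_7), c_2\log(c_8)]$ (with the obvious sign bookkeeping depending on whether $e^{\varphi} \gtrless \Theta$), hence $f(e^{\varphi})/f(\Theta)$ is pinched between two positive constants. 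One must be slightly careful that $\Theta(t) = e^{\mathcal F(t) + c} \to \infty$ as $t \to \infty$, so these comparisons are on an unbounded range of $y$, but since it is only the \emph{ratio} $e^{\varphi}/\Theta$ that is bounded, the logarithmic derivative bound is precisely the right tool and no compactness in $t$ is needed. In the degenerate case where $f$ is constant, all of this collapses and the estimate is immediate from Lemmas \ref{lemma3.1}, \ref{lemma3.2} and \ref{Gradient} directly. This completes the proof of \eqref{w-ij}.
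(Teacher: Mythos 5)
Your proof is correct and follows exactly the route the paper intends: writing $e^{\varphi}H = v/(\dot\varphi\, f(e^{\varphi}))$ and combining the $\dot\varphi$ estimate (Lemma \ref{lemma3.2}), the gradient estimate (Lemma \ref{Gradient}) and the $C^0$ estimate (Lemma \ref{lemma3.1}). The only remark worth making is that the ``delicate point'' you flag --- the comparability of $f(e^{\varphi})$ and $f(\Theta)$ when $e^{\varphi}/\Theta$ is pinched --- is precisely what assumption (3) of Theorem \ref{main1.1} asserts, so you could cite it directly; your derivation of it from assumption (2) via the logarithmic derivative is a valid (and slightly stronger) alternative.
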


\section{H\"{o}lder Estimates and Convergence} \label{se5}

In this section, we define the  rescaled flow by
\begin{equation*}
\widetilde{X}=X\Theta^{-1}.
\end{equation*}
Thus,
\begin{equation*}
\widetilde{u}=u\Theta^{-1},
\end{equation*}
\begin{equation*}
\widetilde{\varphi}=\varphi-\log\Theta,
\end{equation*}
and the rescaled mean curvature is given by
\begin{equation*}
\widetilde{H}=H\Theta.
\end{equation*}
Then, the rescaled scalar curvature equation takes the form
\begin{equation*}
\frac{\partial}{\partial
t}\widetilde{u}=\frac{v}{f(u)\widetilde{H}}-\frac{1}{n}\widetilde{u}f^{-1}(\Theta).
\end{equation*}
Defining $t=t(s)$ by the relation
\begin{equation*}
\frac{dt}{ds} = f(\Theta)
\end{equation*}
such that $t(0) = 0$ and $t(S) = T$. Then $\widetilde{u}$ satisfies
\begin{equation}\label{Eq-re}
\left\{
\begin{aligned}
&\frac{\partial}{\partial
s}\widetilde{u}=\frac{vf(\Theta)}{f(\widetilde{u}\Theta)\widetilde{H}}-\frac{1}{n}\widetilde{u}
\qquad && \mathrm{in}~
M^n\times(0,S)\\
&D_{\mu} \widetilde{u}=0  \qquad && \mathrm{on}~ \partial M^n\times(0,S)\\
&\widetilde{u}(\cdot,0)=\widetilde{u}_{0}  \qquad &&
\mathrm{in}~M^n.
\end{aligned}
\right.
\end{equation}

\begin{lemma}\label{res-01}
Let $X$ be a solution of (\ref{Eq}) and $\widetilde{X}=X
\Theta^{-1}$ be the rescaled solution. Then
\begin{equation*}
\begin{split}
&D \widetilde{u}=D u \Theta^{-1}, ~~~~D \widetilde{\varphi}=D \varphi,~~~~ \frac{\partial \widetilde{u}}{\partial s}=\frac{ \partial u}{\partial t} \frac{f(\Theta)}{\Theta}- \frac{1}{n}u\Theta^{-1},\\
&\widetilde{g}_{ij}=
\Theta^{-2}g_{ij},~~~~\widetilde{g}^{ij}=\Theta^{2}
g^{ij},~~~~\widetilde{h}_{ij}=h_{ij}\Theta^{-1}.
\end{split}
\end{equation*}
\end{lemma}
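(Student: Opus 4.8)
The statement to prove is Lemma \ref{res-01}, which is a collection of elementary scaling identities relating geometric quantities of the original flow $X$ to those of the rescaled flow $\widetilde{X} = X\Theta^{-1}$, where $\Theta = \Theta(t,c) = e^{\mathcal{F}(t)+c}$ depends only on time. These are exactly the kind of bookkeeping identities one verifies by direct computation, so the plan is simply to unwind each definition carefully, keeping in mind that $\Theta$ is a function of $t$ alone and is independent of the spatial variable $x\in M^n$.

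\medskip

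The plan is as follows. First I would record the basic definitions $\widetilde{u} = u\Theta^{-1}$ and $\widetilde{\varphi} = \varphi - \log\Theta$ from the start of Section \ref{se5}. For the spatial derivatives, since $\Theta$ depends only on $t$, applying $D$ (the covariant derivative on $M^n\subset\mathbb{S}^n$) kills $\Theta$: one gets $D\widetilde{u} = \Theta^{-1} Du$ immediately, and $D\widetilde{\varphi} = D\varphi$ since $D\log\Theta = 0$. For the $s$-derivative of $\widetilde{u}$, I would use the chain rule together with the reparametrization $dt/ds = f(\Theta)$: writing $\widetilde{u} = u\Theta^{-1}$ and differentiating in $s$ gives $\partial_s\widetilde{u} = (\partial_t u)\,\Theta^{-1}\,\frac{dt}{ds} + u\,\frac{d}{ds}(\Theta^{-1})$. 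Now $\frac{d}{ds}\Theta^{-1} = -\Theta^{-2}\Theta'(t)\frac{dt}{ds}$, and by construction (see the proof of Lemma \ref{lemma3.1}) $\Theta'(t) = \mathcal{F}'(t)\Theta$ with $\mathcal{F}'(t) = \frac{1}{nf(\Theta)}$, so $\Theta'(t) = \frac{\Theta}{nf(\Theta)}$; combined with $\frac{dt}{ds} = f(\Theta)$ this yields $\frac{d}{ds}\Theta^{-1} = -\frac{1}{n}\Theta^{-1}$. Hence $\partial_s\widetilde{u} = (\partial_t u)\,\Theta^{-1} f(\Theta) - \frac{1}{n} u\Theta^{-1}$, which is precisely the claimed formula $\frac{\partial u}{\partial t}\frac{f(\Theta)}{\Theta} - \frac{1}{n}u\Theta^{-1}$.

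\medskip

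Next I would handle the metric and second fundamental form. Since $\widetilde{X} = \Theta(t)\cdot X$ is, at each fixed time, a homothety of $\mathbb{R}^{n+1}$ by the constant factor $\Theta^{-1}$, the induced metric scales by the square of that factor: $\widetilde{g}_{ij} = \Theta^{-2} g_{ij}$, hence $\widetilde{g}^{ij} = \Theta^{2} g^{ij}$. The unit normal is invariant under homothety, $\widetilde\nu = \nu$, and the second fundamental form $h_{ij} = \langle \partial_{ij} X, \nu\rangle$ (up to tangential terms, or via the Weingarten relation) picks up one factor of the scaling, $\widetilde{h}_{ij} = \Theta^{-1} h_{ij}$; equivalently one can note $\widetilde{h}^i_j = \widetilde{g}^{ik}\widetilde{h}_{kj} = \Theta^2 g^{ik}\cdot\Theta^{-1}h_{kj} = \Theta\, h^i_j$, consistent with the already-recorded $\widetilde{H} = H\Theta$ and $\widetilde{h}_{ij}\widetilde{g}^{ij}=H\Theta$. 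I would remark that these homothety scaling laws are standard (they are implicit in Section \ref{se5} where $\widetilde{H}=H\Theta$ was stated).

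\medskip

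There is no real obstacle here: the only point requiring a moment's care is the $\partial_s\widetilde{u}$ identity, where one must correctly combine the three ingredients — the time-reparametrization $dt/ds = f(\Theta)$, the product rule applied to $u\Theta^{-1}$, and the ODE $\Theta'(t) = \Theta/(nf(\Theta))$ governing the blow-up factor — to see the cancellation producing the clean $-\frac{1}{n}u\Theta^{-1}$ term. Everything else is a one-line consequence of either ($\Theta$ is $x$-independent) or ($\widetilde X$ is a constant homothety of $X$ at each time). I would therefore present the proof as: (1) spatial derivatives via $D\Theta = 0$; (2) the $s$-derivative via chain rule and the $\Theta$-ODE; (3) metric/second-fundamental-form scaling via homothety invariance, cross-checking against $\widetilde H = H\Theta$.
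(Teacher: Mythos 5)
Your proposal is correct and matches the paper's approach exactly: the paper's own proof is just the one line ``These relations can be computed directly,'' and your computation supplies precisely those details (the key step, $\frac{d}{ds}\Theta^{-1}=-\frac{1}{n}\Theta^{-1}$ via $\Theta'(t)=\Theta/(nf(\Theta))$ and $dt/ds=f(\Theta)$, is right). Only a trivial slip: you once write $\widetilde{X}=\Theta(t)\cdot X$ where you mean $\widetilde{X}=\Theta^{-1}X$, but the homothety factor you actually use, $\Theta^{-1}$, is the correct one.
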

\begin{proof}
These relations can be computed directly.
\end{proof}

\begin{lemma} \label{lemma4-3}
Let $u$ be a solution to the parabolic system \eqref{Evo-1}, where
$\varphi(x,t)=\log u(x,t)$, and $\Sigma^n$ be the boundary of a
smooth, convex domain described as in Theorem \ref{main1.1}. Then
there exist some $\beta>0$ and some $C>0$ such that the rescaled
function $\widetilde{u}(x,s):=u(x,t(s)) \Theta^{-1}(t(s))$ satisfies
\begin{equation}
 [D \widetilde{u}]_{\beta}+\left[\frac{\partial \widetilde{u}}{\partial s}\right]_{\beta}+[\widetilde{H}]_{\beta}\leq C(||u_{0}||_{C^{2+\gamma,1+\frac{\gamma}{2}}(M^n)}, n, \beta, M^n),
\end{equation}
where $[f]_{\beta}:=[f]_{x,\beta}+[f]_{s,\frac{\beta}{2}}$ is the
sum of the H\"{o}lder coefficients of $f$ in $M^n\times[0,S]$ with
respect to $x$ and $s$.
\end{lemma}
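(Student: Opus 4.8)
The plan is to treat the rescaled scalar equation \eqref{Eq-re} as a uniformly parabolic equation for $\widetilde{u}$ subject to an oblique (Neumann--type) boundary condition on $\partial M^n$, whose structural constants are controlled purely by the zeroth-- and first--order a priori bounds of Section \ref{se4}, and then to invoke the interior and up--to--the--boundary Krylov--Safonov / De Giorgi--Nash--Moser Hölder estimates for such problems (as in \cite{Ge3} and the parabolic theory used in \cite{Mar,mt}). Once $D\widetilde{u}$ is shown to be Hölder, the bounds on $\partial_s\widetilde{u}$ and $\widetilde{H}$ will follow by differentiating the equation once more in $s$ and by reading \eqref{Eq-re} backwards.

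First I would record what the earlier estimates give after rescaling. By Lemma \ref{lemma3.1}, $c_7\le\widetilde{u}\le c_8$; by Lemma \ref{res-01}, $D\widetilde{\varphi}=D\varphi$, so Lemma \ref{Gradient} yields $|D\widetilde{u}|=\widetilde{u}\,|D\varphi|\le C$ and a two--sided bound $1\le v\le C$ with $v=\sqrt{1+|D\varphi|^2}$. Writing the mean curvature of a graph over $M^n$ in its usual form $\widetilde{H}=\widetilde{H}(x,\widetilde{u},D\widetilde{u},D^2\widetilde{u})$, which is affine in $D^2\widetilde{u}$, equation \eqref{Eq-re} becomes $\partial_s\widetilde{u}=F(x,s,\widetilde{u},D\widetilde{u},D^2\widetilde{u})$ with $\partial F/\partial(D_{ij}\widetilde{u})$ positive definite; by the corollary \eqref{w-ij} together with the bounds on $\widetilde{u}$ and $|D\widetilde{u}|$, the eigenvalues of $\partial F/\partial(D_{ij}\widetilde{u})$ are pinched between two positive constants depending only on $\|u_0\|_{C^{2+\gamma,1+\frac{\gamma}{2}}(M^n)}$, $n$ and $M^n$, so \eqref{Eq-re} is uniformly parabolic uniformly in $s$. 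I would also note that solving \eqref{Eq-re} for $\widetilde{H}$ gives
\begin{equation*}
\partial_s\widetilde{u}+\tfrac1n\widetilde{u}=\frac{v\,f(\Theta)}{f(\widetilde{u}\Theta)\,\widetilde{H}}>0,
\end{equation*}
and that the ratio $f(\Theta)/f(\widetilde{u}\Theta)$ is bounded above and below uniformly in $s$: if $f$ is homogeneous this is immediate from $\widetilde{u}\in[c_7,c_8]$, while for a bounded $f$ it is precisely assumption (3) of Theorem \ref{main1.1} with $g(t)=\Theta(t)$. Together with \eqref{w-ij} and $1\le v\le C$ this makes $\partial_s\widetilde{u}+\frac1n\widetilde{u}$, hence $\partial_s\widetilde{u}$ itself, bounded above and below by positive constants uniformly in $s$.

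Next I would differentiate. Differentiating $\partial_s\widetilde{u}=F(\cdots)$ in a tangential direction $e_k$, the component $w:=D_k\widetilde{u}$ solves a linear parabolic equation whose leading coefficients are the (uniformly parabolic, bounded, measurable) coefficients $\partial F/\partial(D_{ij}\widetilde{u})$ from the previous step and whose lower--order coefficients and inhomogeneity are bounded, using $f\in C^2$ and the tame explicit $s$--dependence of $\Theta=e^{\mathcal{F}(t)+c}$ (indeed $\partial_s\Theta=\Theta/n$). On $\partial M^n$, differentiating $D_\mu\widetilde{u}=0$ tangentially and using that the smooth cone $\Sigma^n$ furnishes a fixed smooth transverse field $\mu$, one gets a boundary condition for $w$ with bounded data, the remaining control coming from the a priori estimates of Section \ref{se4} (which already incorporate the convexity of $\Sigma^n$). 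Since $|w|\le C$, the interior and boundary Krylov--Safonov Hölder estimates for oblique derivative problems produce $\beta\in(0,1)$ and $C=C(\|u_0\|_{C^{2+\gamma,1+\frac{\gamma}{2}}(M^n)},n,\beta,M^n)$ with $[D\widetilde{u}]_\beta\le C$, uniformly in $S$. Differentiating instead in $s$ (note $D_\mu\partial_s\widetilde{u}=0$ on $\partial M^n$), $\partial_s\widetilde{u}$ likewise solves a linear uniformly parabolic equation with bounded coefficients, so the same estimates give $[\partial_s\widetilde{u}]_\beta\le C$; then $\widetilde{H}=v\,f(\Theta)\big/\!\left(f(\widetilde{u}\Theta)\left(\partial_s\widetilde{u}+\tfrac1n\widetilde{u}\right)\right)$ is Hölder because its denominator is bounded away from $0$, which yields $[\widetilde{H}]_\beta\le C$. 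Adding the three contributions proves the lemma.

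The hard part will be the boundary analysis in the previous step: one must extract from $D_\mu\widetilde{u}=0$ a genuine oblique derivative problem for $D_k\widetilde{u}$ with obliqueness and coefficients bounded uniformly in $s$, and then invoke the correct global (up--to--the--boundary) parabolic Hölder estimate near $\partial M^n$. This is exactly where the perpendicular contact with $\Sigma^n$ and the convexity used for the estimates of Section \ref{se4} enter; away from $\partial M^n$ the argument is the standard interior Krylov--Safonov one, and the uniformity of all input bounds in $S$ is what makes the final constant independent of $S$.
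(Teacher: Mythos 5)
The central step of your argument --- ``differentiating $\partial_s\widetilde{u}=F(x,s,\widetilde{u},D\widetilde{u},D^2\widetilde{u})$ in a tangential direction $e_k$, the component $w:=D_k\widetilde{u}$ solves a linear parabolic equation whose \ldots lower--order coefficients and inhomogeneity are bounded'' --- does not hold, and this is a genuine gap. Since $F\sim v\,f(\Theta)/\bigl(f(\widetilde u\Theta)\widetilde H\bigr)$ with $\widetilde H$ affine in $D^2\widetilde u$, the derivatives $\partial F/\partial(D_i\widetilde u)$, $\partial F/\partial\widetilde u$ and $\partial F/\partial x_k$ all contain $\widetilde H^{-2}$ times quantities that are \emph{linear in $D^2\widetilde u$} (the coefficients of the affine operator $\widetilde H$ depend on $x$, $\widetilde u$, $D\widetilde u$). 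At this stage of the proof only the trace combination $\widetilde H$ is bounded (Corollary 3.7); the full Hessian $D^2\widetilde u$ is not, so the linear equation you write for $D_k\widetilde u$ (and likewise for $\partial_s\widetilde u$) has first-- and zeroth--order coefficients and a forcing term that are a priori unbounded. Krylov--Safonov/oblique-derivative H\"older estimates require at least integrable control of these coefficients, so the estimate you invoke is not available. (The same obstruction is why the paper's own $\dot\varphi$ estimate, Lemma 3.2, only uses the maximum principle --- which tolerates unbounded gradient coefficients --- and cannot by itself be upgraded to a H\"older bound.)

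The paper circumvents this by exploiting two pieces of divergence structure that your proposal does not use. For $[D\widetilde u]_\beta$ it freezes $s$ and rewrites the scalar equation as the elliptic divergence-form Neumann problem
\begin{equation*}
\mathrm{div}_{\sigma}\Bigl(\frac{D\varphi}{\sqrt{1+|D\varphi|^2}}\Bigr)=\frac{n}{\sqrt{1+|D\varphi|^2}}-f^{-1}(e^{\varphi})\,\frac{\sqrt{1+|D\varphi|^2}}{\dot\varphi},
\end{equation*}
whose right-hand side is bounded by Lemmas 3.2 and 3.4, so the Ladyzhenskaya--Ural'ceva gradient--H\"older theory applies. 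For $[\partial_s\widetilde u]_\beta$ it proves (Lemma 4.4, Appendix) that $\Psi=(f H w)^{-1}$, hence $\Omega=\Psi f(\Theta)$, satisfies a \emph{divergence-form} parabolic equation with controlled extra terms; an energy/Caccioppoli argument with test function $\xi^2\Omega$ then places $\Omega$ in the De Giorgi class and yields $[\Omega]_\beta\le C$. Only the final step of your proposal (recovering $[\widetilde H]_\beta$ by solving the equation for $\widetilde H$) coincides with the paper and is sound once the first two bounds are in hand. To repair your argument you would need to either supply these divergence structures or give an independent bound on $|D^2\widetilde u|$, neither of which is present.
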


\begin{proof}
We divide our proof in three steps\footnote{~In the proof of Lemma
\ref{lemma4-3}, the constant $C$ may differ from each other.
However, we abuse the symbol $C$ for the purpose of convenience.}.

\textbf{Step 1:} We need to prove that
\begin{equation*}
  [D \widetilde{u}]_{x,\beta}+[D \widetilde{u}]_{s,\frac{\beta}{2}}\leq C(|| u_0||_{ C^{2+\gamma,1+\frac{\gamma}{2}}(M^n)}, n, \beta, M^n).
\end{equation*}
According to Lemmas \ref{lemma3.1}, \ref{lemma3.2} and
\ref{Gradient}, it follows that
$$|D \widetilde{u}|+\left|\frac{\partial \widetilde{u}}{\partial s}\right|\leq C(|| u_0||_{ C^{2+\gamma,1+\frac{\gamma}{2}}(M^n)}, M^n).$$
Fix $s$ and the equation (\ref{Evo-1}) can be rewritten
as an elliptic Neumann problem
 \begin{equation} \label{key1}
   \mbox{div}_{\sigma}\left(\frac{D \varphi}{\sqrt{1+|D\varphi|^2}}\right)=\frac{n}{\sqrt{1+|D\varphi|^2}}-f^{-1}(e^{\varphi})\cdot
    \frac{\sqrt{1+|D\varphi|^2}}{\dot{\varphi}}.
 \end{equation}
 Other discussions are the same as in \cite[Lemma 4.3]{GM}.

\textbf{Step 2:} The next thing to do is to show that
\begin{equation*}
  \left[\frac{\partial \widetilde{u}}{\partial s}\right]_{x,\beta}+\left[\frac{\partial \widetilde{u}}{\partial s}\right]_{s,\frac{\beta}{2}}\leq
  C(||u_0||_{ C^{2+\gamma,1+\frac{\gamma}{2}}(M^n)}, n, \beta,
  M^n).
\end{equation*}
  As $\frac{\partial}{\partial s}\widetilde{u}=\widetilde{u}\left(\frac{vf(\Theta)}{f(\widetilde{u}\Theta)\widetilde{u}\widetilde{H}}-\frac{1}{n}\right)$, it is enough to bound
  $\left[\frac{vf(\Theta)}{\widetilde{u} \widetilde{H}f(\widetilde{u}\Theta)}\right]_{\beta}$.
Set $\Omega(s):=\frac{vf(\Theta)}{\widetilde{u} \widetilde{H}f(\widetilde{u}\Theta)}$. Let $\widetilde{\nabla}$ be the Levi-Civita connection of
$\widetilde{M}_{s}:=\widetilde{X}(M^n,s)$ w.r.t. the metric
$\widetilde{g}$. To prove that $\Omega$ belongs to the De Giorgi class of functions, we will need the following evolution equation.

\begin{lemma}\label{EVQ}
Set $\Phi=\frac{1}{f(e^{\varphi})H} = \frac{1}{f(u)H} = \frac{1}{f(\widetilde{u}\Theta)H}$, $w=\langle X, \nu\rangle = \frac{u}{v}$ and $\Psi=
\frac{\Phi}{w} = \frac{v}{\widetilde{u}\widetilde{H}f(\widetilde{u}\Theta)}$. Under the assumptions of Theorem \ref{main1.1}, we have
\begin{equation}\label{div-for-1}
  \begin{aligned}
 \frac{\partial \Psi }{\partial t}&=\mathrm{div}_g (f^{-1}H^{-2} \nabla \Psi)-2H^{-2}f^{-1}(e^{\varphi})\Psi^{-1}|\nabla \Psi|^2 - f'\frac{u}{f}\Psi^{2} \\
 &\quad
 -f'f^{-1}\Psi^{2}\nabla^{i}u\langle X, X_{i}\rangle - f'f^{-2}H^{-2}\nabla_{i}u\nabla^{i}\Psi.
  \end{aligned}
\end{equation}
\end{lemma}

\begin{proof}
Here, for the continuity of the proof of Lemma \ref{lemma4-3}, we
put the proof of Lemma \ref{EVQ} in Appendix at the end of this
paper. In fact, the main method used for the proof of Lemma
\ref{EVQ} is similar to that in \cite{GM1, GM}, but we still give
all the details in Appendix to show the differences brought by the
usage of the more general function $f$.
\end{proof}

Combining (\ref{div-for-1}) and $\Omega(s) = \Psi f(\Theta)$, we get
\begin{equation}\label{div-form-02}
  \begin{split}
\frac{\partial \Omega}{\partial s}
&=\mbox{div}_{\widetilde{g}}\left( f(\Theta)f^{-1}(\widetilde{u}\Theta)\widetilde{H}^{-2} \widetilde{\nabla}
\Omega\right)-2 \widetilde{H}^{-2}f^{-1}(\widetilde{u}\Theta)f(\Theta) \Omega^{-1}
|\widetilde{\nabla} \Omega|^2_{\widetilde{g}} + \frac{1}{n}\frac{f'(\Theta)\Theta}{f(\Theta)}\cdot\Omega \\
&\quad
 - \frac{f'(u)u}{f(u)}\cdot\Omega^{2} - \frac{f'(u)u}{f(u)}\Omega^2 P - \frac{f'(u)u}{f^2(u)}\frac{f(\Theta)}{\widetilde{u}}\widetilde{H}^{-2}\widetilde{\nabla}_{i}\widetilde{u}\widetilde{\nabla}^{i}\Omega,
  \end{split}
\end{equation}
where $P:=u^{-1}\nabla^{i}u\langle X, X_{i}\rangle$. Applying Lemma \ref{Gradient}, we have
\begin{equation*}
|P|\leq |\nabla u|^{2}\leq C.
\end{equation*}
The weak formulation of (\ref{div-form-02}) is
\begin{equation}\label{div-form-03}
\begin{split}
\int_{s_0}^{s_1} \int_{\widetilde{M}_s}  &\frac{\partial
\Omega }{\partial s}  \eta d\mu_s ds  =\int_{s_0}^{s_1}
\int_{\widetilde{M}_s} \mbox{div}_{\widetilde{g}} (
f(\Theta)f^{-1}(\widetilde{u}\Theta)\widetilde{H}^{-2} \widetilde{\nabla} \Omega) \eta -2
\widetilde{H}^{-2}f^{-1}(\widetilde{u}\Theta)f(\Theta) \Omega^{-1} |\widetilde{\nabla}
\Omega|^2_{\widetilde{g}} \eta d\mu_s ds\\
&+
\int_{s_{0}}^{s_{1}}\int_{\widetilde{M}_{s}}\left( \frac{1}{n}\frac{f'(\Theta)\Theta}{f(\Theta)}\cdot\Omega - \frac{f'(u)u}{f(u)}\cdot\Omega^{2} - \frac{f'(u)u}{f(u)}\Omega^2 P - \frac{f'(u)u}{f^2(u)}\frac{f(\Theta)}{\widetilde{u}}\widetilde{H}^{-2}\widetilde{\nabla}_{i}\widetilde{u}\widetilde{\nabla}^{i}\Omega\right)\eta d\mu_s ds.
\end{split}
\end{equation}
Since $\nabla_{\mu} \widetilde{\varphi}=0$, the boundary integrals
all vanish, the interior and boundary estimates are basically the
same. We define the test function $\eta:=\xi^2 \Omega$, where
$\xi$ is a smooth function with values in $[0,1]$ and is supported
in a small parabolic neighborhood. Then
\begin{equation}\label{imcf-hec-for-02}
\begin{split}
\int_{s_0}^{s_1} \int_{\widetilde{M}_s}  \frac{\partial
\Omega }{\partial s}  \xi^2 \Omega d\mu_s ds=
\frac{1}{2}||\Omega \xi||_{2,\widetilde{M}_s}^2
\Big{|}_{s_0}^{s_1}-\int_{s_0}^{s_1} \int_{\widetilde{M}_s}  \xi
\dot{\xi} \Omega^2 d\mu_s ds.
\end{split}
\end{equation}
where $\dot{\xi}:=\frac{\partial\xi}{\partial s}$.
Using the divergence theorem and Young's inequality and the second assumption of the function $f$ in the Theorem \ref{main1.1}, we can obtain
\begin{equation}\label{imcf-hec-for-03}
\begin{split}
\int_{s_0}^{s_1} \int_{\widetilde{M}_s} & \mbox{div}_{\widetilde{g}} ( f(\Theta)f^{-1}(\widetilde{u}\Theta)\widetilde{H}^{-2} \widetilde{\nabla} \Omega)  \xi^2 \Omega  d\mu_sds\\
&=-\int_{s_0}^{s_1} \int_{\widetilde{M}_s} f(\Theta)f^{-1}(\widetilde{u}\Theta)  \widetilde{H}^{-2}
\xi^2\widetilde{\nabla}_i \Omega
\widetilde{\nabla}^i\Omega  d\mu_sds\\
&\qquad
-2\int_{s_0}^{s_1} \int_{\widetilde{M}_s}f(\Theta)f^{-1}(\widetilde{u}\Theta) \widetilde{H}^{-2} \xi \Omega\widetilde{\nabla}_i\Omega \widetilde{\nabla}^i \xi d\mu_sds\\
&\leq\int_{s_0}^{s_1} \int_{\widetilde{M}_s} c_{11} \widetilde{H}^{-2}
|\widetilde{\nabla} \xi|^2\Omega^2  d\mu_sds.
\end{split}
\end{equation}
and
\begin{equation}\label{5-9}
  \begin{aligned}
    &\int_{s_{0}}^{s_{1}}\int_{\widetilde{M}_{s}}\left( \frac{1}{n}\frac{f'(\Theta)\Theta}{f(\Theta)}\cdot\Omega - \frac{f'(u)u}{f(u)}\cdot\Omega^{2} - \frac{f'(u)u}{f(u)}\Omega^2 P \right.\\
    &\qquad\qquad\qquad\qquad
    \left. - \frac{f'(u)u}{f^2(u)}\frac{f(\Theta)}{\widetilde{u}}\widetilde{H}^{-2}\widetilde{\nabla}_{i}\widetilde{u}\widetilde{\nabla}^{i}\Omega\right)\xi^{2}\Omega d\mu_s ds\\
    &\leq
    C(c_{2},n,P)\int_{s_{0}}^{s_{1}}\int_{\widetilde{M}_{s}}\xi^{2}(\Omega^{2}+|\Omega|^{3})d\mu_{s}ds \\
    &\qquad\qquad
    + \int_{s_{0}}^{s_{1}}\int_{\widetilde{M}_{s}}\frac{f'(u)u}{f(u)}\cdot\frac{f(\Theta)}{f(\widetilde{u}\Theta)\cdot\widetilde{u}}\widetilde{H}^{-2}|\widetilde{\nabla}\widetilde{u}||\widetilde{\nabla}\Omega|\xi^{2}\Omega d\mu_{s}ds\\
    &\leq
    C(c_{2},n,P)\int_{s_{0}}^{s_{1}}\int_{\widetilde{M}_{s}}\xi^{2}(\Omega^{2}+|\Omega|^{3})d\mu_{s}ds + \frac{c_{2}}{2}\int_{s_{0}}^{s_{1}}\int_{\widetilde{M}_{s}}\widetilde{H}^{-2}\frac{f(\Theta)}{f(\widetilde{u}\Theta)}|\widetilde{\nabla}\Omega|^{2}\xi^{2}d\mu_{s}ds\\
    &\qquad\qquad
    +\frac{c_{2}}{2}\int_{s_{0}}^{s_{1}}\int_{\widetilde{M}_{s}}\widetilde{H}^{-2}\frac{f(\Theta)}{f(\widetilde{u}\Theta)\cdot\widetilde{u}^{2}}|\widetilde{\nabla}\widetilde{u}|^{2}\xi^{2}\Omega^{2}d\mu_{s}ds.
  \end{aligned}
\end{equation}

Combing (\ref{imcf-hec-for-02}), (\ref{imcf-hec-for-03}),
(\ref{5-9}) and the third assumption (3) of the function $f$ in
Theorem \ref{main1.1}, we have
 \begin{equation}\label{imcf-hec-for-04}
\begin{split}
&\frac{1}{2}||\Omega \xi||_{2,\widetilde{M}_s}^2
\Big{|}_{s_0}^{s_1}
+C(2-\frac{c_{2}}{2})\int_{s_0}^{s_1} \int_{\widetilde{M}_s} \widetilde{H}^{-2} |\widetilde{\nabla} \Omega|^2  \xi^2   d\mu_sds\\
&\qquad \leq \int_{s_0}^{s_1} \int_{\widetilde{M}_s}  \xi |\dot{\xi}|
\Omega^2 d\mu_sds +\int_{s_0}^{s_1} \int_{\widetilde{M}_s}c_{11}
\widetilde{H}^{-2} |\widetilde{\nabla} \xi|^2\Omega^2
d\mu_sds \\
&\qquad\qquad
+C(c_{2},n,P)\int_{s_{0}}^{s_{1}}\int_{\widetilde{M}_{s}}\xi^{2}(\Omega^{2}+|\Omega|^{3})d\mu_{s}ds\\
&\qquad\qquad
+\frac{c_{2}c_{5}}{2}\int_{s_{0}}^{s_{1}}\int_{\widetilde{M}_{s}}\widetilde{H}^{-2}\widetilde{u}^{-2}|\widetilde{\nabla}\widetilde{u}|^{2}\xi^{2}\Omega^{2}d\mu_{s}ds,
\end{split}
\end{equation}
 which implies that
 \begin{equation}\label{imcf-hec-for-06}
\begin{split}
&\frac{1}{2}||\Omega \xi||_{2,\widetilde{M}_s}^2
\Big{|}_{s_0}^{s_1}
+\frac{C(c_{2})}{\max(\widetilde{H}^{2}) }\int_{s_0}^{s_1} \int_{\widetilde{M}_s} |\widetilde{\nabla} \Omega|^2  \xi^2   d\mu_sds\\
& \leq \left(1+ \frac{c_{11}}{\min( \widetilde{H}^{2})}\right)
\int_{s_0}^{s_1} \int_{\widetilde{M}_s}  \Omega^2 (\xi
|\dot{\xi}| +|\widetilde{\nabla} \xi|^2)d\mu_s ds\\
& \qquad
+\left(C(c_{2},n,P)+\frac{c_{2}c_{5}\max{|\widetilde{\nabla}\widetilde{u}|^{2}}}{2\min{(\widetilde{H}^{2}\widetilde{u}^{2})}}\right)\int_{s_0}^{s_1} \int_{\widetilde{M}_s}\xi^{2}\Omega^{2}+\xi^{2}|\Omega|^{3}d\mu_{s}ds.
\end{split}
\end{equation}
This means that $\Omega$ belong to the De Giorgi class of
functions in $M^n \times [0,S)$. Similar to the arguments in
\cite[Chap. 5, \S 1 and \S 7]{La2},  there exist  constants $\beta$
and $C$ such that
$$[\Omega]_{\beta}\leq C ||\Omega||_{L^{\infty}(M^n \times [0,S))}\leq  C(|| u_0||_{ C^{2+\gamma,1+\frac{\gamma}{2}}(M^n)}, n, \beta, M^n).$$

\textbf{Step 3:} Finally, we have to show that
\begin{equation*}
  [ \widetilde{H}]_{x,\beta}+[\widetilde{H}]_{s,\frac{\beta}{2}}\leq C(|| u_0||_{ C^{2+\gamma,1+\frac{\gamma}{2}}(M^n)}, n, \beta, M^n).
\end{equation*}
This follows from the fact that
$$\widetilde{H}=\frac{\sqrt{1+|D\widetilde{\varphi}|^2}\cdot f(\Theta)}{\widetilde{u}f(\widetilde{u}\Theta) \Omega}$$
together with the estimates for $\widetilde{u}$, $\Omega$,
$D\widetilde{\varphi}$ and the conditions of $f$.
\end{proof}

\begin{remark}\label{remark 3}
\rm{In this paper, the function $f$ is much more general than that
in \cite{GM,mt} and we don't have the explicit expression. So we
only can write $f(\widetilde{u}\Theta)$ (related with
$\widetilde{u}$) in the rescaled scalar curvature equation rather
than $f(\widetilde{u})$. Furthermore, by Lemma \ref{lemma3.1} we
know $c_{7}\leq \widetilde{u}\leq c_{8}$ and $\Theta(t)$ is a
function only related to $t$. Using the third assumption (3) in
Theorem \ref{main1.1}, we  can also obtain the conclusion of Lemma
\ref{lemma4-3}.}
\end{remark}

Then we can obtain the following higher-order estimates:
\begin{lemma}
Let $u$ be a solution to the parabolic system \eqref{Evo-1}, where
$\varphi(x,t)=\log u(x,t)$, and $\Sigma^n$ be the boundary of a
smooth, convex domain described as in Theorem \ref{main1.1}. Then
for any $s_0\in (0,S)$ there exist some $\beta>0$ and some $C>0$
such that
\begin{equation}\label{imfcone-holder-01}
||\widetilde{u}||_{C^{2+\beta,1+\frac{\beta}{2}}(M^n\times
[0,S])}\leq C(|| u_0||_{ C^{2+\gamma, 1+\frac{\gamma}{2}}(M^n)}, n,
\beta, M^n)
\end{equation}
and for all $k\in \mathbb{N}$,
\begin{equation}\label{imfcone-holder-02}
||\widetilde{u}||_{C^{2k+\beta,k+\frac{\beta}{2}}(M^n\times
[s_0,S])}\leq C(||u_0(\cdot,
s_0)||_{C^{2k+\beta,k+\frac{\beta}{2}}(M^n)}, n, \beta, M^n).
\end{equation}
\end{lemma}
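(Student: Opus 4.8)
The plan is to bootstrap from the Hölder estimates already established in Lemma~\ref{lemma4-3} using the standard Schauder theory for parabolic equations with Neumann boundary conditions, exactly in the spirit of Gerhardt \cite{Ge90, Ge3} and Marquardt \cite{Mar}. Having $[D\widetilde{u}]_{\beta}$, $[\partial_s\widetilde{u}]_{\beta}$ and $[\widetilde{H}]_{\beta}$ bounded, the first step is to observe that $\widetilde{u}$ solves a rescaled scalar equation of the form \eqref{Eq-re}; freezing the time variable, this can be written as an elliptic Neumann problem of the type \eqref{key1} whose coefficients depend on $x$, $\widetilde{u}$, $D\widetilde{u}$ and, crucially, on the now-Hölder-continuous quantities $\widetilde{H}$ and $\Omega$. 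Since by Lemmas~\ref{lemma3.1}--\ref{Gradient} and the Corollary in \eqref{w-ij} all these ingredients are uniformly bounded above and below (in particular $c_9\le \widetilde{H}\le c_{10}$ keeps the equation uniformly elliptic), linear elliptic Schauder estimates with the oblique (here: co-normal) boundary condition $D_\mu\widetilde{u}=0$ on the convex cone $\Sigma^n$ upgrade the bound to $\|\widetilde{u}(\cdot,s)\|_{C^{2+\beta}(M^n)}\le C$ uniformly in $s$. Feeding this back into the parabolic equation, the coefficients and the right-hand side become $C^{\beta,\beta/2}$, and parabolic Schauder theory (as in \cite[Note 2.5.4]{Ge3}) yields the global estimate \eqref{imfcone-holder-01}, $\|\widetilde{u}\|_{C^{2+\beta,1+\beta/2}(M^n\times[0,S])}\le C$.

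For the higher-order estimates \eqref{imfcone-holder-02}, the plan is a standard iteration. On $M^n\times[s_0,S]$ the equation is uniformly parabolic with coefficients built from $\widetilde{u}$, $D\widetilde{u}$, $D^2\widetilde{u}$, $\Theta(t(s))$ and the function $f$, all of which are now at least $C^{\beta,\beta/2}$; moreover, away from $s=0$ the solution is smooth by the short-time existence statement (the $C^\infty(M^n\times(0,T^\ast))$ regularity already recorded in the excerpt). Differentiating the equation in tangential directions and in $s$, each derivative of $\widetilde{u}$ satisfies a linear parabolic equation with a homogeneous co-normal Neumann condition inherited from $D_\mu\widetilde{\varphi}=0$ (here one uses that $\Sigma^n$ is a cone, so $\mu$ is invariant under the scaling, and that the boundary is smooth and convex); applying Schauder estimates to this linear problem gives $C^{2k+\beta,k+\beta/2}$ control in terms of $C^{2(k-1)+\beta,(k-1)+\beta/2}$ control, and induction on $k$ closes the argument with the stated dependence on $\|u_0(\cdot,s_0)\|_{C^{2k+\beta,k+\beta/2}(M^n)}$.

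The main technical obstacle is the boundary regularity: the estimates are local in the interior but must be carried uniformly up to $\partial M^n$, where the Neumann condition is only \emph{oblique} (the co-normal vector $\mu$ of the cone need not be conormal to the flat connection on $M^n\subset\mathbb{S}^n$). The resolution is the one used already in \cite{GM,Mar}: flatten the boundary locally via a diffeomorphism, check that the transformed oblique boundary operator has the required regularity and the strict obliqueness $\langle\mu,\nu\rangle$-type nondegeneracy (which follows from convexity of $\Sigma^n$ and the star-shapedness preserved by Lemma~\ref{Gradient}), and invoke the Schauder theory for oblique-derivative parabolic problems. A secondary point to watch is that $f$ enters the coefficients only through $f(\widetilde{u}\Theta)$, $f'$, and the ratios controlled by assumptions (1)--(3) of Theorem~\ref{main1.1}; since $\widetilde{u}$ is pinched between $c_7$ and $c_8$ and $\Theta$ depends only on $t$, these terms are $C^{2}$ in their argument and inherit the Hölder regularity of $\widetilde{u}$, so they do not degrade the bootstrap. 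Everything else is routine Schauder iteration, and no new geometric input beyond what was proved in Sections~\ref{se4}--\ref{se5} is needed.
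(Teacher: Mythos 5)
Your proposal is correct and follows essentially the same route as the paper: use the H\"older bounds from Lemma \ref{lemma4-3} to view the rescaled flow equation as a linear, uniformly parabolic Neumann problem with H\"older-continuous coefficients, apply linear parabolic Schauder theory (Lieberman, Chap.~4) for \eqref{imfcone-holder-01}, and then bootstrap by differentiating the equation and inducting on $k$ for \eqref{imfcone-holder-02}. The intermediate time-frozen elliptic step you insert is harmless but unnecessary — the paper passes directly to the parabolic Schauder estimate — and your differentiating $\widetilde{u}$ rather than $\widetilde{\varphi}=\log\widetilde{u}$ is an immaterial difference.
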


\begin{proof}
By \cite[Lemma 2.1]{GM}, we have
$$uvH=n-(\sigma^{ij}-\frac{\varphi^{i}\varphi^{j}}{v^{2}})\varphi_{ij}=n-u^2 \Delta_g \varphi.$$
Since
$$u^2 \Delta_g \varphi=\widetilde{u}^2 \Delta_{\widetilde{g}} \varphi=
-| \widetilde{\nabla} \widetilde{u}|^2+ \widetilde{u}
\Delta_{\widetilde{g}} \widetilde{u},$$ then
\begin{equation*}
\begin{split}
\frac{\partial \widetilde{u}}{\partial s}&=\frac{ \partial u}{\partial t} \Theta^{-1}f(\Theta)-\frac{1}{n} \widetilde{u}\\
&=-\frac{uvH}{u H^2f(e^{\varphi})} \Theta^{-1}f(\Theta) +\frac{2v}{Hf(e^{\varphi})} \Theta^{-1}f(\Theta)- \frac{1}{n}\widetilde{u}\\
&=\frac{\Delta_{\widetilde{g}} \widetilde{u}}{\widetilde{H}^2}\cdot\frac{f(\Theta)}{f(e^{\varphi})}+
\frac{2v}{\widetilde{H}}\cdot\frac{f(\Theta)}{f(e^{\varphi})} - \frac{1}{n}\widetilde{u} -\frac{n+|
\widetilde{\nabla} \widetilde{u}|^2}{\widetilde{u} \widetilde{H}^2}\cdot\frac{f(\Theta)}{f(e^{\varphi})},
\end{split}
\end{equation*}
which is  a uniformly parabolic equation with H\"{o}lder continuous
coefficients. Therefore, the linear theory (see \cite[Chap.
4]{Lieb}) yields the inequality (\ref{imfcone-holder-01}).

Set $\widetilde{\varphi}=\log \widetilde{u}$, and then the rescaled
version of the evolution equation in (\ref{Eq-re}) takes the form
\begin{equation*}
  \frac{\partial \widetilde{\varphi}}{\partial s}=\frac{f(\Theta)}{f(e^{\varphi})}\cdot\frac{ v^2}{ \left[n-\left(\sigma^{ij}-\frac{\widetilde{\varphi}^i\widetilde{\varphi}^j}{v^2}\right) \widetilde{\varphi}_{ij}\right]}-\frac{1}{n},
\end{equation*}
where $v=\sqrt{1+|D \widetilde{\varphi}|^2}$. According to the
$C^{2+\beta,1+\frac{\beta}{2}}$-estimate of $\widetilde{u}$ (see
Lemma \ref{lemma4-3}), we can treat the equations for $\frac{
\partial\widetilde{\varphi}}{\partial s}$ and $D_i
\widetilde{\varphi}$ as second-order linear uniformly parabolic PDEs
on $M^n\times [s_0,S]$. At the initial time $s_0$, all compatibility
conditions are satisfied and the initial function $u(\cdot,t_0)$ is
smooth. We can obtain a $C^{3+\beta, \frac{3+\beta}{2}}$-estimate
for $D_i \widetilde{\varphi}$ and a $C^{2+\beta,
\frac{2+\beta}{2}}$-estimate for $\frac{
\partial\widetilde{\varphi}}{\partial s}$ (the estimates are
independent of $T$) by Theorem 4.3 and Exercise 4.5 in \cite[Chap.
4]{Lieb}. Higher regularity can be proven by induction over $k$.
\end{proof}

\begin{theorem} \label{key-2}
Under the hypothesis of Theorem \ref{main1.1}, we conclude
\begin{equation*}
T^{*}=+\infty.
\end{equation*}
\end{theorem}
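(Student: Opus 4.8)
The plan is to derive $T^{*}=+\infty$ by the standard continuation argument: if $T^{*}<\infty$, then the a priori estimates established above extend the solution past $T^{*}$, contradicting maximality. Concretely, I would first recall that by Lemma~\ref{lemma3.1} the function $u(\cdot,t)\Theta^{-1}(t)$ is pinched between two positive constants $c_{7},c_{8}$, and combining this with the gradient estimate of Lemma~\ref{Gradient}, the rescaled graph function $\widetilde{u}$ and its spatial gradient $D\widetilde{\varphi}=D\varphi$ stay in a fixed compact region bounded away from the degenerate locus. Together with the corollary giving $0<c_{9}\le H\Theta\le c_{10}$, every geometric quantity of the rescaled flow, expressed through $\widetilde{u}$, is under control independently of $T$.

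Next I would invoke the higher-order estimate \eqref{imfcone-holder-01}, namely $\|\widetilde{u}\|_{C^{2+\beta,1+\frac{\beta}{2}}(M^n\times[0,S])}\le C$, and more generally \eqref{imfcone-holder-02} on $M^n\times[s_{0},S]$ for every $k$, where $C$ depends only on $\|u_{0}\|_{C^{2+\gamma,1+\frac{\gamma}{2}}(M^n)}$, $n$, $\beta$, $M^n$ and not on $S$. Since the time reparametrization $t=t(s)$ satisfies $\frac{dt}{ds}=f(\Theta)$ with $f(\Theta)$ bounded above and below on any finite $t$-interval (using assumptions (1)--(3) on $f$ and the $C^{0}$ bound on $\varphi$), a finite $T^{*}$ corresponds to a finite $S^{*}$, so the rescaled estimates are genuinely uniform up to $S^{*}$. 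Undoing the rescaling $\widetilde{X}=X\Theta^{-1}$ and using Lemma~\ref{res-01}, one transfers these bounds back to $u$ on $M^n\times[0,T^{*}]$: in particular $u\in C^{2+\beta,1+\frac{\beta}{2}}(M^n\times[0,T^{*}])$ with the matrix $\big(n-(\sigma^{ij}-\frac{\varphi^{i}\varphi^{j}}{v^{2}})\varphi_{ij}\big)$ still uniformly positive, since $H\Theta\ge c_{9}>0$ and $\Theta(T^{*})<\infty$.

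With these bounds in hand, $u(\cdot,T^{*})$ is an admissible initial datum of the same regularity class and with the compatibility (Neumann) condition preserved along the flow, so the short-time existence result (the Lemma following \eqref{Evo-1}) applies with $u(\cdot,T^{*})$ as initial hypersurface and produces a solution on $[T^{*},T^{*}+\varepsilon)$ for some $\varepsilon>0$. Splicing this with the original solution yields an admissible solution of \eqref{Evo-1} on $[0,T^{*}+\varepsilon)$, contradicting the definition of $T^{*}$ as the maximal existence time. Hence $T^{*}=+\infty$.

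The main obstacle is ensuring that \emph{all} the a priori estimates are uniform up to the (hypothetical) finite time $T^{*}$ and do not degenerate there: one must check that $\Theta(t)$ remains finite on $[0,T^{*}]$, that $f(\Theta)$ stays bounded away from $0$ and $\infty$ so the $s\leftrightarrow t$ change of variables is a bi-Lipschitz correspondence on finite intervals, and that the rescaled higher-order estimates \eqref{imfcone-holder-01}--\eqref{imfcone-holder-02} (which a priori live on the $s$-interval $[0,S]$) translate without loss into $t$-estimates. Once it is verified that the constants in Lemmas~\ref{lemma3.1}--\ref{Gradient}, the corollary, and the higher-order estimates depend only on the initial data and not on $T<T^{*}$ — which is exactly how they were stated — the contradiction argument closes routinely.
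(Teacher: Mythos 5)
Your proposal is correct and is essentially the argument the paper intends: the paper omits the proof, deferring to the standard continuation argument of Marquardt (Lemma 8 of \cite{Mar}), which is precisely the contradiction-via-a-priori-estimates-plus-short-time-existence scheme you describe. Your additional care about the finiteness of $\Theta$ and the bi-Lipschitz correspondence $t\leftrightarrow s$ on finite intervals is a sensible filling-in of details the paper leaves implicit.
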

\begin{proof}
The proof of this result is quite similar to the corresponding
argument in \cite[Lemma 8]{Mar} and so is omitted.
\end{proof}

\section{Convergence of the rescaled flow} \label{se6}

We know that after the long-time existence of the flow has been
obtained (see Theorem \ref{key-2}), the rescaled version of the
system (\ref{Evo-1}) satisfies
\begin{equation}
\left\{
\begin{aligned}
&\frac{\partial}{\partial
s}\widetilde{\varphi}=\widetilde{Q}(\widetilde{\varphi},D\widetilde{\varphi},
D^2\widetilde{\varphi})  \qquad &&\mathrm{in}~
M^n\times(0,\infty)\\
&D_{\mu} \widetilde{\varphi}=0  \qquad &&\mathrm{on}~ \partial M^n\times(0,\infty)\\
&\widetilde{\varphi}(\cdot,0)=\widetilde{\varphi}_{0} \qquad
&&\mathrm{in}~M^n,
\end{aligned}
\right.
\end{equation}
where
$$\widetilde{Q}(\widetilde{\varphi}, D\widetilde{\varphi},
D^2\widetilde{\varphi}):=\frac{f(\Theta)}{f(e^{\widetilde{\varphi}}\cdot\Theta)}\cdot\frac{ v^2}{
\left[n-\left(\sigma^{ij}-\frac{\widetilde{\varphi}^i\widetilde{\varphi}^j}{v^2}\right)
\widetilde{\varphi}_{ij}\right]}-\frac{1}{n}$$ and
$\widetilde{\varphi}=\log \widetilde{u}$. Similar to what has been
done in the $C^1$ estimate (see Lemma \ref{Gradient}), we can deduce
a decay estimate of $\widetilde{u}(\cdot, s)$ as follows.

\begin{lemma} \label{lemma5-1}
Let $u$ be a solution of \eqref{Eq-}, then we have
\begin{equation}\label{Gra-est1}
|D\widetilde{u}(x, s)|\leq c_{12}\sup_{M^n}|D\widetilde{u}(\cdot,
0)|,
\end{equation}
where $c_{12}$ is a positive constant depending only on $c_{7}$,
$c_{8}$.
\end{lemma}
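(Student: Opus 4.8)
The plan is to mimic the gradient estimate for the unrescaled flow (Lemma \ref{Gradient}), but now applied to the rescaled quantity $\widetilde{u}$, using the decay term $-\frac{1}{n}\widetilde{u}$ in the rescaled equation \eqref{Eq-re} to produce the desired bound. First I would set $\widetilde{\psi}=\frac{1}{2}|D\widetilde{\varphi}|^2$, where $\widetilde{\varphi}=\log\widetilde{u}=\varphi-\log\Theta$, and note that since $\log\Theta$ depends only on $s$ (via $t$), we have $D\widetilde{\varphi}=D\varphi$, so by Lemma \ref{Gradient} the quantity $|D\widetilde{\varphi}|=|D\varphi|\le\sup_{M^n}|D\varphi(\cdot,0)|$ is already bounded. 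The point of the present lemma is rather to express this in terms of $\widetilde{u}$ and to track the constants: since $D\widetilde{u}=\widetilde{u}\,D\widetilde{\varphi}$ and $c_7\le\widetilde{u}\le c_8$ by Lemma \ref{lemma3.1}, we get $|D\widetilde{u}(x,s)|=\widetilde{u}|D\widetilde{\varphi}|\le c_8|D\varphi(\cdot,0)|$, and similarly at $s=0$ we have $|D\widetilde{u}(\cdot,0)|=\widetilde{u}_0|D\varphi(\cdot,0)|\ge c_7|D\varphi(\cdot,0)|$. Combining these yields $|D\widetilde{u}(x,s)|\le\frac{c_8}{c_7}\sup_{M^n}|D\widetilde{u}(\cdot,0)|$, so we may take $c_{12}=c_8/c_7$, which depends only on $c_7,c_8$ as claimed.

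Alternatively, and more in the spirit of ``similar to what has been done in the $C^1$ estimate'', I would work directly with $\widetilde{\psi}=\frac{1}{2}|D\widetilde{\varphi}|^2$ and differentiate along the rescaled flow. Using $\frac{\partial}{\partial s}\widetilde{\varphi}=\widetilde{Q}$, one computes $\frac{\partial\widetilde{\psi}}{\partial s}=\widetilde{Q}_m\widetilde{\varphi}^m$, and after interchanging covariant derivatives and invoking $R_{ijml}=\sigma_{im}\sigma_{jl}-\sigma_{il}\sigma_{jm}$ on $\mathbb{S}^n$, one arrives at an evolution inequality of the form
\begin{equation*}
\frac{\partial\widetilde{\psi}}{\partial s}\le\widetilde{Q}^{ij}\widetilde{\psi}_{ij}+\widetilde{Q}^k\widetilde{\psi}_k+(\text{terms from the }-\tfrac1n\widetilde{\varphi}\text{ part and the }f\text{-derivative}),
\end{equation*}
where the extra terms carry a favorable sign thanks to the convexity of $M^n$ (so that the $\widetilde{Q}^{ij}\sigma_{ij}|D\widetilde{\varphi}|^2$ term helps) together with assumption (2) on $f$, exactly as in the proof of Lemma \ref{Gradient}. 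On the boundary, since $M^n$ is convex and $D_\mu\widetilde{\varphi}=0$, the same argument as in \cite[Lemma 3.3]{GM} gives $D_\mu\widetilde{\psi}\le0$. The maximum principle then yields $|D\widetilde{\varphi}(x,s)|\le\sup_{M^n}|D\widetilde{\varphi}(\cdot,0)|$, and converting back to $\widetilde{u}$ via the $C^0$ bounds of Lemma \ref{lemma3.1} gives \eqref{Gra-est1} with $c_{12}$ depending only on $c_7,c_8$.

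The main obstacle, such as it is, is bookkeeping rather than anything deep: one must check that the new lower-order terms appearing in $\widetilde{Q}$ compared to $Q$—namely the $-\frac1n$ additive constant and the factor $f(\Theta)/f(e^{\widetilde{\varphi}}\Theta)$—do not spoil the sign needed for the maximum principle, and one must be careful that the constant relating $|D\widetilde{u}|$ at time $s$ to its initial value absorbs the ratio of the $\widetilde{u}$-bounds cleanly. Since the additive constant has vanishing gradient and the $f$-factor is handled exactly as the $e^{\varphi}f'(e^{\varphi})f^{-1}(e^{\varphi})$ term was in Lemma \ref{Gradient} (using assumption (2) on $f$), no genuinely new difficulty arises, and the proof reduces to the two routine steps above.
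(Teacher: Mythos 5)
Your proposal is correct, and your second route is essentially the paper's proof: the paper sets $\widetilde{\psi}=\tfrac12|D\widetilde{\varphi}|^2$, derives the parabolic inequality $\partial_s\widetilde{\psi}\le\widetilde{Q}^{ij}\widetilde{\psi}_{ij}+\widetilde{Q}^k\widetilde{\psi}_k$ with $D_\mu\widetilde{\psi}\le0$ on the boundary (using convexity of the cone and assumption (2) on $f$ to absorb the term coming from the factor $f(\Theta)/f(e^{\widetilde{\varphi}}\Theta)$, exactly as you describe), and concludes by the maximum principle and Hopf's lemma. Your first route is a genuine shortcut that the paper does not take: since $\Theta$ depends only on $s$, one has $D\widetilde{\varphi}=D\varphi$ (this identity is even recorded in Lemma \ref{res-01}), so the bound on $|D\widetilde{\varphi}|$ is already contained in Lemma \ref{Gradient} and no new maximum-principle argument is needed; the conversion $|D\widetilde{u}|=\widetilde{u}\,|D\widetilde{\varphi}|$ together with $c_7\le\widetilde{u}\le c_8$ then gives $c_{12}=c_8/c_7$ directly (with the trivial case $\sup_{M^n}|D\widetilde{u}(\cdot,0)|=0$ handled separately). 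Either argument yields the stated estimate with a constant depending only on $c_7,c_8$.
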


\begin{proof}
Set $\widetilde{\psi}=\frac{|D \widetilde{\varphi}|^2}{2}$. Similar
to that in Lemma \ref{Gradient}, we also can obtain
\begin{equation*}
\left\{
\begin{aligned}
&\frac{\partial \widetilde{\psi}}{\partial s}\leq
\widetilde{Q}^{ij}\widetilde{\psi}_{ij}+\widetilde{Q}^k\widetilde{\psi}_k
\quad &&\mathrm{in}~
M^n\times(0,\infty)\\
&D_\mu \widetilde{\psi} \leq 0 \quad &&\mathrm{on}~\partial M^n\times(0,\infty)\\
&\psi(\cdot,0)=\frac{|D\widetilde{\varphi}(\cdot,0)|^2}{2}
\quad&&\mathrm{in}~M^n.
\end{aligned}\right.
\end{equation*}
Using the maximum principle and Hopf's lemma, we can get the
gradient estimate of $\widetilde{\varphi}$, and then the conclusion
(\ref{Gra-est1}) follows.
\end{proof}

\begin{lemma}\label{rescaled flow}
Let $u$ be a solution of the flow \eqref{Eq-}. Then,
\begin{equation*}
\widetilde{u}(\cdot, s)
\end{equation*}
converges to a real number as $s\rightarrow +\infty$.
\end{lemma}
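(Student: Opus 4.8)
The plan is to show that $\widetilde{u}(\cdot,s)$ converges to a constant by combining the uniform higher-order estimates already established with a monotonicity/energy argument for the rescaled flow. First I would observe that, by the estimate \eqref{imfcone-holder-01} and the subsequent smoothing estimates, the family $\{\widetilde{u}(\cdot,s)\}_{s\geq 0}$ is precompact in $C^{2}(M^n)$ (indeed in $C^{\infty}(M^n)$), so it suffices to identify all possible subsequential limits and show they coincide. The natural quantity to control is the oscillation $\omega(s):=\operatorname{osc}_{M^n}\widetilde{u}(\cdot,s)=\sup_{M^n}\widetilde{u}(\cdot,s)-\inf_{M^n}\widetilde{u}(\cdot,s)$, or equivalently $\operatorname{osc}_{M^n}\widetilde{\varphi}(\cdot,s)$; I would show this tends to $0$ as $s\to\infty$. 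Since the gradient estimate of Lemma \ref{lemma5-1} already gives that $|D\widetilde{u}(\cdot,s)|$ stays bounded (and in fact bounded by the initial gradient up to a constant), the oscillation is automatically bounded, so the real content is the decay to zero.

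The main step is to extract decay. I would apply the maximum principle to $\widetilde{\varphi}$ itself: writing $M(s):=\sup_{M^n}\widetilde{\varphi}(\cdot,s)$ and $m(s):=\inf_{M^n}\widetilde{\varphi}(\cdot,s)$, the equation $\partial_s\widetilde{\varphi}=\widetilde{Q}(\widetilde{\varphi},D\widetilde{\varphi},D^2\widetilde{\varphi})$ together with the Neumann boundary condition $D_\mu\widetilde{\varphi}=0$ forces $M(s)$ to be non-increasing and $m(s)$ non-decreasing, at points where the derivative exists, because at an interior spatial maximum $D^2\widetilde{\varphi}\leq 0$ and $D\widetilde{\varphi}=0$, so $\widetilde{Q}=\frac{f(\Theta)}{f(e^{\widetilde{\varphi}}\Theta)}\cdot\frac{1}{n-(\sigma^{ij})\widetilde{\varphi}_{ij}}-\frac{1}{n}\leq \frac{f(\Theta)}{f(e^{\widetilde{\varphi}}\Theta)}\cdot\frac1n-\frac1n$, and one uses assumption (3) on $f$ (the two-sided bound $c_5 f(\Theta)\leq f(e^{\widetilde\varphi}\Theta)\leq c_6 f(\Theta)$ coming from $c_7\le \widetilde u\le c_8$) to compare the stationary value; at the stationary (constant) solution $\widetilde{\varphi}\equiv\log r_\infty$ one has $\widetilde Q=0$. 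Thus $M(s)$ and $m(s)$ are monotone and bounded, hence converge; and since any subsequential limit $\widetilde{u}_\infty$ of $\widetilde{u}(\cdot,s)$ must be a stationary solution of the rescaled flow with zero Neumann data, it satisfies $\widetilde{Q}(\widetilde\varphi_\infty,\cdot,\cdot)=0$; integrating this against the volume form and using $\int_{M^n}\operatorname{div}(\cdots)=0$ (Neumann condition) forces $\widetilde{u}_\infty$ to be spatially constant. Once every subsequential limit is a constant and $\sup$ and $\inf$ both converge, the limits must agree, so $\widetilde{u}(\cdot,s)$ converges to a single real number.

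Alternatively — and this is probably the cleaner route, matching the De Giorgi-type analysis in Lemma \ref{lemma4-3} — one shows the oscillation satisfies a differential inequality $\omega'(s)\leq -c\,\omega(s)$ for some $c>0$ (uniform parabolicity of the rescaled equation, whose coefficients are Hölder and bounded above and below by the curvature bound \eqref{w-ij} and the $C^0$ bounds, gives a Harnack/gradient-type estimate forcing exponential contraction), hence $\omega(s)\to 0$ exponentially, giving both convergence and a rate. The hard part will be ensuring the constants in the maximum-principle/Harnack argument are genuinely uniform in $s$: the coefficient $f(\Theta)/f(e^{\widetilde\varphi}\Theta)$ must be controlled on both sides, and this is exactly where assumption (3) on $f$ is indispensable, since without an explicit formula for $f$ one cannot simplify $f(\widetilde u\Theta)$ and must instead invoke the comparison $c_5 f(\Theta)\le f(\widetilde u\Theta)\le c_6 f(\Theta)$ guaranteed by the $C^0$ bound $c_7\le\widetilde u\le c_8$ from Lemma \ref{lemma3.1}. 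Handling the Neumann boundary term (via Hopf's lemma, exactly as in Lemma \ref{lemma5-1}) is routine by comparison with that lemma, so the only real obstacle is the interior decay estimate, which I would obtain by the maximum-principle argument above.
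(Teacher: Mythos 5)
Your route is genuinely different from the paper's, and as written it has two gaps that are not cosmetic. First, the monotonicity claim for $M(s)=\sup_{M^n}\widetilde{\varphi}(\cdot,s)$ is false in general. At an interior spatial maximum you correctly get $\widetilde{Q}\le \frac{1}{n}\bigl(\frac{f(\Theta)}{f(e^{\widetilde{\varphi}}\Theta)}-1\bigr)$, but the sign of the right-hand side is that of $f(\Theta)-f(\widetilde{u}\Theta)$, which (since $f$ is increasing by assumption (2)) is positive whenever $\sup_{M^n}\widetilde{u}<1$; assumption (3) only gives $c_5 f(\Theta)\le f(\widetilde{u}\Theta)\le c_6 f(\Theta)$ and does not force the comparison constant to sit on the right side of $1$. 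Indeed for $f(y)=y^{\alpha}$, $\alpha>0$, a constant solution $\widetilde{u}\equiv c<1$ of \eqref{Eq-re} satisfies $\partial_s\widetilde{u}=\frac{c}{n}(c^{-\alpha}-1)>0$, so $M(s)$ increases; relatedly, your assertion that $\widetilde{Q}=0$ at every constant $\widetilde{\varphi}\equiv\log r_\infty$ is also false except for special values. Second, and more seriously, even if $M(s)$ and $m(s)$ are shown to be monotone and convergent, nothing in your argument shows they converge to the \emph{same} limit, which is the entire content of the lemma. Your patch --- every subsequential limit is stationary with $\widetilde{Q}=0$, and integration forces it to be constant --- is unjustified: the rescaled equation is non-autonomous (the coefficient $f(\Theta)/f(\widetilde{u}\Theta)$ depends on $s$ through $\Theta\to\infty$), so subsequential limits need not be stationary unless you first prove $\partial_s\widetilde{u}\to 0$, which you do not; moreover $\widetilde{Q}$ is not in divergence form, so ``integrate and use the Neumann condition'' has no meaning as stated. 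The alternative inequality $\omega'(s)\le -c\,\omega(s)$ is precisely the missing estimate, not a proof of it.

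For comparison, the paper proceeds by a different mechanism: it computes the first variation of area along the unrescaled flow, $P'(t)=\int_{M_t^n}f^{-1}(u)\,d\mathcal{H}^n$, combines this with the $C^0$ estimate \eqref{C^0} to get $f^{-1}(e^{\mathcal{F}(t)+\varphi_2})P\le P'\le f^{-1}(e^{\mathcal{F}(t)+\varphi_1})P$, integrates to deduce that $\mathcal{H}^n(\widetilde{M}_s)$ is bounded above and below independently of $s$, and only then invokes \eqref{imfcone-holder-01}, Lemma \ref{lemma5-1} and Arzel\`{a}--Ascoli. The area computation is also what yields the quantitative bounds \eqref{radius} on $r_\infty$, which your argument never recovers. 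If you wish to keep your route, the step you must actually supply is the decay of $\operatorname{osc}_{M^n}\widetilde{u}(\cdot,s)$ (equivalently of $|D\widetilde{u}|$) to zero; the boundedness provided by Lemma \ref{lemma5-1} is not sufficient for that.
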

\begin{proof}
Set $P(t):= \mathcal{H}^n(M_{t}^{n})$, which, as before, represents
the $n$-dimensional Hausdorff measure of $M_{t}^{n}$ and is actually
the area of $M_{t}^{n}$. According to the first variation of a
submanifold, see e.g. \cite{ls},
 and the fact $\mbox{div}_{M_{t}^{n}}\nu=H$, we have
\begin{equation}\label{imcf-crf-for-01}
\begin{split}
P'(t)&=\int_{M_{t}^{n}} \mbox{div}_{M_{t}^{n}} \left( \frac{\nu}{f(|X|)H}\right) d\mathcal{H}^n\\
&=\int_{M_{t}^{n}} \sum_{i=1}^n \left\langle \nabla_{e_i}\left(\frac{\nu}{f(|X|)H}\right), e_i\right\rangle d\mathcal{H}^n\\
&=\int_{M_{t}^{n}}f^{-1}(u)d\mathcal{H}^n,
\end{split}
\end{equation}
where $\{e_{i}\}_{1\leq i\leq n}$ is some orthonormal basis of the
tangent bundle $TM_{t}^{n}$. By the condition of the function $f$ and \eqref{C^0}, we have
$$f^{-1}\left(e^{\mathcal{F}(t)+\varphi_2}\right)\leq f^{-1}(u)\leq f^{-1}\left(e^{\mathcal{F}(t)+\varphi_1}\right)$$
Hence, we have
\begin{equation*}
  f^{-1}\left(e^{\mathcal{F}(t)+\varphi_2}\right)P(t)\leq P'(t)\leq f^{-1}\left(e^{\mathcal{F}(t)+\varphi_1}\right)P(t).
\end{equation*}
Combining this fact with (\ref{imcf-crf-for-01}), the construction
process of function $\mathcal{F}$, and choosing the constant $c$ in
Lemma \ref{lemma3.1} as $\varphi_{1}$ and $\varphi_{2}$
respectively, we know
\begin{equation*}
\mathcal{H}^{n}(M^{n}_{0})\cdot \frac{e^{n\left(\mathcal{F}(t)+\varphi_2\right)}}{e^{n\varphi_2}}\leq P(t)\leq \mathcal{H}^{n}(M_0^n)\cdot \frac{e^{n\left(\mathcal{F}(t)+\varphi_1\right)}}{e^{n\varphi_1}}.
\end{equation*}
 Therefore, the rescaled
hypersurface $\widetilde{M}_s=M_{t}^{n} \Theta^{-1}$ satisfies the
following inequality
 \begin{eqnarray*}
  \frac{\mathcal{H}^{n}(M^{n}_{0})}{e^{n\varphi_2}}\leq
\mathcal{H}^n(\widetilde{M}_s)\leq
\frac{\mathcal{H}^{n}(M^{n}_{0})}{e^{n\varphi_1}},
 \end{eqnarray*}
 which implies that the area
of  $\widetilde{M}_s$ is bounded and the bounds are independent of
$s$. Here $\varphi_1=\inf_{M^n} \varphi(\cdot,0)$ and
$\varphi_2=\sup_{M^n} \varphi(\cdot,0)$. Together with
(\ref{imfcone-holder-01}), Lemma \ref{lemma5-1} and the
Arzel\`{a}-Ascoli theorem, we conclude that $\widetilde{u}(\cdot,s)$
must converge in $C^{\infty}(M^n)$ to a constant function
$r_{\infty}$ with
\begin{eqnarray*}
\frac{1}{e^{\varphi_{2}}}\left(\frac{\mathcal{H}^n(M_{0}^{n})}{\mathcal{H}^n(M^{n})}\right)^{\frac{1}{n}}\leq
r_{\infty}
\leq\frac{1}{e^{\varphi_{1}}}\left(\frac{\mathcal{H}^n(M_{0}^{n})}{\mathcal{H}^n(M^{n})}\right)^{\frac{1}{n}},
\end{eqnarray*}
i.e.,
\begin{eqnarray}\label{radius}
\frac{1}{\sup\limits_{M^{n}}u_{0}}\left(\frac{\mathcal{H}^n(M_{0}^{n})}{\mathcal{H}^n(M^{n})}\right)^{\frac{1}{n}}\leq
r_{\infty}
\leq\frac{1}{\inf\limits_{M^{n}}u_{0}}\left(\frac{\mathcal{H}^n(M_{0}^{n})}{\mathcal{H}^n(M^{n})}\right)^{\frac{1}{n}}.
\end{eqnarray}
This completes the proof.
\end{proof}

So, we have
\begin{theorem}\label{rescaled flow}
The rescaled flow
\begin{equation*}
\frac{d
\widetilde{X}}{ds}=\frac{f(\Theta)}{f(|X|)\widetilde{H}}\nu-\frac{1}{n}\widetilde{X}
\end{equation*}
exists for all time and the leaves converge in $C^{\infty}$ to a
piece of the round sphere of radius
$r_{\infty}$, where
$r_{\infty}$ satisfies (\ref{radius}).
\end{theorem}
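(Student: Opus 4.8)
\textbf{Proof proposal for Theorem \ref{rescaled flow}.}

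The plan is to assemble the pieces established in Sections \ref{se4}--\ref{se6} rather than to prove anything substantially new. First I would invoke Theorem \ref{key-2}, which already gives $T^{*}=+\infty$; rewriting the unrescaled flow via $\widetilde{X}=X\Theta^{-1}$ and using the time change $dt/ds=f(\Theta)$ (both introduced at the start of Section \ref{se5}), one checks by a direct computation—using Lemma \ref{res-01} and the evolution equation in \eqref{Eq-re}—that $\widetilde{X}$ satisfies $\tfrac{d}{ds}\widetilde{X}=\tfrac{f(\Theta)}{f(|X|)\widetilde{H}}\nu-\tfrac1n\widetilde{X}$, and that $s$ ranges over all of $[0,\infty)$ since $f(\Theta)>0$ is bounded below on compact $t$-intervals and $\Theta(t)\to\infty$ forces $S=\infty$ as well. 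This establishes the long-time existence half of the statement.

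Next I would address convergence. By Lemma \ref{rescaled flow} (the area-monotonicity lemma), $\widetilde{u}(\cdot,s)\to r_\infty$ in $C^\infty(M^n)$ as $s\to\infty$, where $r_\infty$ satisfies the two-sided bound \eqref{radius}; this already uses the uniform higher-order estimates \eqref{imfcone-holder-01}, the decay estimate of Lemma \ref{lemma5-1}, and the Arzel\`a--Ascoli theorem. Since $\widetilde{u}$ is the graph function of $\widetilde{M}_s$ over $M^n$, convergence of $\widetilde{u}$ to the constant $r_\infty$ in $C^\infty$ is exactly convergence of the rescaled leaves $\widetilde{M}_s$ to the graph of the constant function $r_\infty$, i.e. to the piece $\{r_\infty x : x\in M^n\}$ of the round sphere of radius $r_\infty$. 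One should also note that the Neumann condition $D_\mu\widetilde{u}=0$ passes to the limit, so the limiting spherical cap meets $\Sigma^n$ perpendicularly, consistent with the geometric picture.

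The only remaining point requiring a word of care is that $C^\infty$ convergence of the graph functions genuinely upgrades to $C^\infty$ convergence of the embeddings $\widetilde{X}(\cdot,s)$ (equivalently of the leaves as submanifolds); this is immediate because $\widetilde{X}(x,s)=(x,\widetilde{u}(x,s))$ in the graph parametrization, so all geometric quantities are smooth functions of $\widetilde{u}$ and its derivatives, which converge uniformly with all derivatives by the uniform estimates of Section \ref{se5}. I do not expect any genuine obstacle here: all the hard analysis—the a priori $C^0$, $\dot\varphi$, gradient, and De Giorgi--type H\"older estimates, and the resulting higher-order bounds—has already been carried out, and the present theorem is essentially a restatement of Lemma \ref{rescaled flow} together with Theorem \ref{key-2} in the language of the rescaled flow. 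The proof is therefore short: cite Theorem \ref{key-2} for all-time existence, Lemma \ref{rescaled flow} for the $C^\infty$ convergence of $\widetilde{u}$ to $r_\infty$, and translate back to the statement about the leaves.
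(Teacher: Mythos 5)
Your proposal is correct and matches the paper's (essentially implicit) argument: the paper states this theorem with no separate proof, presenting it as the immediate consequence of Theorem \ref{key-2} for long-time existence together with Lemma 6.2 for the $C^\infty$ convergence of $\widetilde{u}$ to the constant $r_\infty$ satisfying (\ref{radius}). Your only loose step, the claim that $S=\infty$, is most cleanly seen from $ds=dt/f(\Theta)=n\,d\mathcal{F}$, so $S=n\mathcal{F}(T)\to\infty$ as $T\to T^*=\infty$ because $\mathcal{F}$ is increasing and cannot stay bounded (its derivative would then be bounded below).
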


\section*{Appendix}

\begin{proof}[Proof of Lemma \ref{EVQ}]
  Using the Gauss formula,
 we have
 \begin{equation*}
 \begin{split}
 \partial_{t}h_{ij}&=
 \partial_{t}\langle \partial_i \partial_j X, -\nu\rangle\\
 &=-\nabla^{2}_{ij} \Phi+\Phi h_{ik}h_{j}^{k}.
 \end{split}
 \end{equation*}
 Direct calculation results in
 \begin{equation*}
   \begin{aligned}
 \nabla^{2}_{ij}\Phi&=\Phi(-\frac{1}{H}H_{ij}+\frac{2H_i H_j}{H^2}) + 2f'f^{-1}\Phi u_{i}(\log H)_{j}\\
 &\quad
 -f'f^{-1}\Phi u_{ij} - (f''f^{-1} - 2f^{-2}(f')^{2})\Phi u_{i}u_{j}.
   \end{aligned}
 \end{equation*}
 Since we have
 \begin{eqnarray*}
 \Delta h_{ij}=H_{ij}+H h_{ik}h^{k}_{j}-h_{ij}|A|^2,
 \end{eqnarray*}
 so
 \begin{equation*}
   \begin{aligned}
 \nabla^{2}_{ij}\Phi&=-\Phi H^{-1}\Delta h_{ij}+\Phi h_{ik}h^{k}_{j}
 -\Phi H^{-1}|A|^2 h_{ij}+2\Phi\frac{H_i H_j}{H^2}\\
 &\quad
 +2f'f^{-1}\Phi u_{i}(\log H)_{j} - f'f^{-1}\Phi u_{ij} - (f''f^{-1} - 2f^{-2}(f')^{2})\Phi u_{i}u_{j}.
   \end{aligned}
 \end{equation*}
 Thus
 \begin{equation*}
   \begin{aligned}
 \partial_{t}h_{ij}-\Phi H^{-1}\Delta h_{ij}&=\Phi H^{-1}|A|^2 h_{ij}-\frac{2\Phi}{H^2}H_i H_j - 2f'f^{-1}\Phi u_{i}(\log H)_{j} \\
 &\quad
 + f'f^{-1}\Phi u_{ij} + (f''f^{-1} - 2f^{-2}(f')^{2})\Phi u_{i}u_{j}.
   \end{aligned}
 \end{equation*}
 Then
 \begin{equation*}
 \begin{split}
 \partial_t H&=  h_{ij}\partial_t g^{ij} + g^{ij} \partial_t h_{ij}\\
 &= \Phi H^{-1}\Delta H - \frac{2\Phi}{H^{2}}|\nabla H|^{2} - \Phi|A|^{2} - 2f'f^{-1}\Phi u_{i}(\log H)^{i} \\
 &\quad
 +f'f^{-1}\Phi\Delta u + (f''f^{-1} - 2f^{-2}(f')^{2})\Phi |\nabla u|^{2} \\
 &=
 f^{-1}H^{-2}\Delta H - 2f^{-1}H^{-3}|\nabla H|^{2} - f^{-1}H^{-1}|A|^{2} - 2f'f^{-2}H^{-2}u_{i}H^{i}\\
 &\quad
 + f'f^{-2}H^{-1}\nabla u + (f''f^{-1} - 2f^{-2}(f')^{2})f^{-1}H^{-1}|\nabla u|^{2}.
 \end{split}
 \end{equation*}
 Clearly
 \begin{eqnarray*}
 \partial_{t}w= \Phi + f'f^{-1}\Phi\nabla^{i}u\langle X, X_{i}\rangle + \Phi H^{-1}\nabla^{i}H\langle X, X_{i}\rangle,
 \end{eqnarray*}
 using the Weingarten equation, we have
 \begin{eqnarray*}
 w_i=h_{i}^{k}\langle X, X_k\rangle,
 \end{eqnarray*}
 \begin{eqnarray*}
 w_{ij}=h_{i,j}^{k}\langle X,
 X_k\rangle+h_{ij}-h_{i}^{k}h_{kj}\langle X, \nu\rangle
 =h_{ij, k}\langle X, X^k\rangle+h_{ij}-h_{i}^{k}h_{kj}\langle
 X, \nu\rangle.
 \end{eqnarray*}
 Thus
 \begin{eqnarray*}
 \Delta w= H+\nabla^i H\langle X, X_i\rangle - |A|^2 \langle X,
 \nu\rangle,
 \end{eqnarray*}
  and
 \begin{eqnarray*}
 \partial_t w= f^{-1}H^{-2} \Delta w + f^{-1}H^{-2} w |A|^2 + f'f^{-2}H^{-1}\nabla^{i} u\langle X, X_{i}\rangle.
 \end{eqnarray*}
 Hence
 \begin{equation*}
 \begin{split}
 \frac{\partial \Psi }{\partial t}&= - \frac{f'}{f^{2}Hw}\partial_{t}u -  \frac{1}{fH^2w} \partial_tH -\frac{1}{fHw^2} \partial_tw\\
 &= -f'f^{-3}H^{-2}w^{-2}u - (f''f^{-1} - 2f^{-2}(f')^{2})f^{-2}H^{-3}w^{-1}|\nabla u|^{2} + 2f^{-2}H^{-5}w^{-1}|\nabla H|^{2} \\
 &\quad
 + 2f'f^{-3}H^{-4}w^{-1}u_{i}H^{i} - f'f^{-3}H^{-3}w^{-1}\Delta u - f^{-2}H^{-4}w^{-1}\Delta H  - f^{-2}H^{-3}w^{-2}\Delta w \\
 &\quad
 -f'f^{-3}H^{-2}w^{-2}\nabla^{i}u\langle X, X_{i}\rangle.
 \end{split}
 \end{equation*}
 In order to prove (\ref{div-for-1}), we calculate
 $$\nabla_i \Psi =-f'f^{-2}H^{-1}w^{-1}\nabla_{i}u - f^{-1}H^{-2}w^{-1}\nabla_i H - f^{-1}H^{-1}w^{-2}\nabla_{i}w,$$
 and
 \begin{equation*}
 \begin{split}
 \nabla^2_{ij}\Psi&= -(f''f^{-1} - 2f^{-2}(f')^{2})f^{-1}H^{-1}w^{-1}\nabla_{i}u\nabla_{j}u + 2f^{-1}H^{-3}w^{-1}\nabla_{i}H\nabla_{j}H + 2f^{-1}H^{-1}w^{-3}\nabla_i w\nabla_j w \\
 & \quad
 + 2f'f^{-2}H^{-2}w^{-1}\nabla_i u\nabla_j H + 2f'f^{-2}H^{-1}w^{-2}\nabla_i u \nabla_j w + 2f^{-1}H^{-2}w^{-2}\nabla_i H\nabla_j w \\
 &\quad
 -f'f^{-2}H^{-1}w^{-1}\nabla_{ij}^{2}u - f^{-1}H^{-2}w^{-1}\nabla_{ij}^{2}H - f^{-1}H^{-1}w^{-2}\nabla_{ij}^{2}w.
 \end{split}
 \end{equation*}
 Thus
 \begin{equation*}
 \begin{split}
  f^{-1}H^{-2} \Delta \Psi
 &= - (f''f^{-1} - 2f^{-2}(f')^{2})f^{-2}H^{-3}w^{-1}|\nabla u|^{2} + 2f^{-2}H^{-5}w^{-1}|\nabla H|^{2} + 2f^{-2}H^{-3}w^{-3}|\nabla w|^{2}  \\
 &\quad
 + 2f'f^{-3}H^{-4}w^{-1}\nabla_i u\nabla^{i}H + 2f'f^{-3}H^{-3}w^{-2}\nabla_i u\nabla^{i}w + 2f^{-2}H^{-4}w^{-2}\nabla_i H\nabla^{i}w \\
 &\quad
 - f'f^{-3}H^{-3}w^{-1}\Delta u - f^{-2}H^{-4}w^{-1}\Delta H - f^{-2}H^{-3}w^{-2}\Delta w.
 \end{split}
 \end{equation*}
 So we have
 \begin{equation*}
 \begin{split}
 &\mbox{div} (f^{-1} H^{-2} \nabla \Psi)=- f'f^{-2}H^{-2}\nabla_i \Psi\nabla^{i}u - 2f^{-1}H^{-3}\nabla_i\Psi\nabla^{i}H + f^{-1}H^{-2}\Delta\Psi\\
 &=-(f''f^{-1} - 3f^{-2}(f')^{2})f^{-2}H^{-3}w^{-1}|\nabla u|^{2} + 4f^{-2}H^{-5}w^{-1}|\nabla H|^{2} + 2f^{-2}H^{-3}w^{-3}|\nabla w|^{2}\\
 &\quad + 5f'f^{-3}H^{-4}w^{-1}\nabla_{i}u\nabla^{i}H + 3f'f^{-3}H^{-3}w^{-2}\nabla_{i}u\nabla^{i}w + 4f^{-2}H^{-4}w^{-2}\nabla_{i}H\nabla^{i}w \\
 & \quad -f'f^{-3}H^{-3}w^{-1}\Delta u - f^{-2}H^{-4}w^{-1}\Delta H - f^{-2}H^{-3}w^{-2}\Delta w,
 \end{split}
 \end{equation*}
 and
 \begin{equation*}
   \begin{split}
 2H^{-1} w |\nabla \Psi|^2&= 2(f')^{2}f^{-4}H^{-3}w^{-1}|\nabla u|^{2} + 2f^{-2}H^{-5}w^{-1}|\nabla H|^{2} + 2f^{-2}H^{-3}w^{-3}|\nabla w|^{2} \\
 &\quad
 +4f'f^{-3}H^{-4}w^{-1}\nabla_i u\nabla^{i}H + 4f'f^{-3}H^{-3}w^{-2}\nabla_i u\nabla^{i}w + 4f^{-2}H^{-4}w^{-2}\nabla_i H\nabla^{i}w.
   \end{split}
 \end{equation*}
 As above, we have
 \begin{equation*}
   \begin{split}
 &\quad\frac{\partial \Psi }{\partial t}-\mbox{div} ( f^{-1}H^{-2} \nabla
 \Psi)+2H^{-1} w |\nabla \Psi|^2 \\
 &=
 -f'f^{-3}H^{-2}w^{-2}u - f'f^{-3}H^{-2}w^{-2}\nabla^{i}u\langle X, X_{i}\rangle + (f')^{2}f^{-4}H^{-3}w^{-1}|\nabla u|^{2} \\
 &\quad
 + f'f^{-3}H^{-4}w^{-1}\nabla_i u\nabla^{i}H + f'f^{-3}H^{-3}w^{-2}\nabla_i u\nabla^{i}w \\
 & =
 - f'\frac{u}{f}\Psi^{2} - f'f^{-1}\Psi^{2}\nabla^{i}u\langle X, X_{i}\rangle - f'f^{-2}H^{-2}\nabla_i u\nabla^{i}\Psi.
   \end{split}
 \end{equation*}
 The proof is finished.
 \end{proof}

\vspace {5 mm}

\section*{Acknowledgments}

This research was supported in part by the NSF of China (Grant No.
11926352), the Fok Ying-Tung Education Foundation (China), and Hubei
Key Laboratory of Applied Mathematics (Hubei University).

\vspace {1 cm}

\end{document}